%
%

\documentclass[12pt]{amsart}
\usepackage{graphics, amsmath, amsfonts, amssymb, amscd, mathrsfs}

\textwidth = 157mm
\textheight = 239mm
\evensidemargin=0mm
\oddsidemargin=0mm
\hoffset=4mm
\voffset=-25mm
\parskip =0.5mm
\parindent = 6mm
\linespread{1.07}
\pagestyle{plain}

\input{xy}
\xyoption{all}

\newtheorem{theorem}{Theorem}
\newtheorem{proposition}[theorem]{Proposition}
\newtheorem{lemma}[theorem]{Lemma}
\newtheorem{corollary}[theorem]{Corollary}

\theoremstyle{definition}
\newtheorem{definition}[theorem]{Definition}
\newtheorem{example}[theorem]{Example}
\newtheorem{remark}[theorem]{Remark}

\newcommand{\C}{\mathbb{C}}
\newcommand{\D}{\mathbb{D}}
\renewcommand{\P}{\mathbb{P}}
\newcommand{\T}{{\mathbb T}}
\newcommand{\Z}{{\mathbb Z}}
\renewcommand{\Re}{\operatorname{Re}}

\title{Holomorphic flexibility properties \\ of compact complex surfaces}

\author{Franc Forstneri\v c and Finnur L\'arusson}

\address{Franc Forstneri\v c, Faculty of Mathematics and Physics, University of Ljubljana, and Institute of Mathematics, Physics, and Mechanics, Jadranska 19, 1000 Ljubljana, Slovenia}
\email{franc.forstneric@fmf.uni-lj.si}

\address{Finnur L\'arusson, School of Mathematical Sciences, University of Adelaide, Adelaide SA 5005, Australia}
\email{finnur.larusson@adelaide.edu.au}

\thanks{Forstneri\v c was supported by grant P1-0291 from ARRS, Republic of Slovenia.  L\'arusson was supported by Australian Research Council grant DP120104110.}

\subjclass[2010]{Primary 32E10.  Secondary 14J28, 32E30, 32G05, 32H02, 32J15, 32Q28, 32S45}

\keywords{Oka principle, holomorphic flexibility, Kobayashi hyperbolicity, Oka manifold, stratified Oka manifold, dominable manifold, strongly dominable manifold, $\C$-connected manifold, compact complex surface, Kummer surface, class VII, Hopf surface, Inoue surface, Enoki surface, Inoue-Hirzebruch surface, intermediate surface, blowing up, blowing down}

\date{20 July 2012.  Most recent changes 21 February 2013}

\begin{document}

\begin{abstract}  We introduce the notion of a stratified Oka manifold and prove that such a manifold $X$ is strongly dominable in the sense that for every $x\in X$, there is a holomorphic map $f:\C^n\to X$, $n=\dim X$, such that $f(0)=x$ and $f$ is a local biholomorphism at $0$.  We deduce that every Kummer surface is strongly dominable.  We determine which minimal compact complex surfaces of class VII are Oka, assuming the global spherical shell conjecture.  We deduce that the Oka property and several weaker holomorphic flexibility properties are in general not closed in families of compact complex manifolds.  Finally, we consider the behaviour of the Oka property under blowing up and blowing down.

\end{abstract}

\maketitle
\tableofcontents


\section{Introduction}

\noindent
The class of Oka manifolds has emerged from the modern theory of the Oka principle, initiated in 1989 in a seminal paper of Gromov \cite{Gromov}.  They were first formally defined by the first-named author in 2009 in the wake of his result that some dozen possible definitions are all equivalent \cite{Forstneric2009}.  A complex manifold $X$ is said to be an \textit{Oka manifold} if the homotopy principle holds for maps from Stein sources into $X$, meaning that every continuous map from a Stein manifold (or, more generally, a reduced Stein space) $S$ into $X$ can be deformed to a holomorphic map, with interpolation on a closed complex subvariety of $S$, uniform approximation on a holomorphically convex compact subset of $S$, and with continuous dependence on a parameter.  Equivalently, for every $n\geq 1$, every holomorphic map from an open neighbourhood of a convex compact subset $K$ of $\C^n$ to $X$ can be uniformly approximated on $K$ by entire maps $\C^n\to X$.  This property, the \textit{convex approximation property,} is the weakest version of the Oka property.  Looking at the Oka property formulated this way, it is immediate that it passes up and down any holomorphic covering map.  More generally, and this is not quite as easy to see, the Oka property passes up and down in any holomorphic fibre bundle with Oka fibres.

The Oka property can be seen as an answer to the question: what should it mean for a complex manifold to be \lq\lq anti-hyperbolic\rq\rq?  Gromov's Oka principle is about sufficient geometric conditions for the Oka property to hold.  The most important such condition is \textit{ellipticity}, that is, possessing a \textit{dominating spray}, a structure that generalises the exponential map of a complex Lie group (\cite{Gromov}, Section 0.5).  For more background, see the monograph \cite{Forstneric2011} and the survey \cite{Forstneric-Larusson}.

One of the central problems of Oka theory is to determine the place of Oka manifolds in the classification of compact complex manifolds.  This is well understood only for manifolds of dimension $1$: a Riemann surface (whether compact or not) is Oka if and only if it is not Kobayashi hyperbolic, and this holds if and only if its universal covering space is not the disc.  In particular, the compact Riemann surfaces that are Oka are the Riemann sphere and all elliptic curves.  Already for complex surfaces the problem is difficult and to a large extent open.  Whether the Oka property is preserved by blowing up and blowing down is a closely related problem, also difficult and very much open.  In particular, we do not know whether an arbitrary Oka manifold blown up at a point is still Oka.  This paper is a contribution towards a solution to these two problems.

We will be concerned with several properties of complex manifolds that are (at least ostensibly) weaker than the Oka property.  

\begin{definition}  \label{d:flexibility-properties}
Let $X$ be a complex manifold (here always taken to be connected).

(a)  $X$ is \textit{stratified Oka} if it admits a stratification $X=X_0\supset X_1\supset\cdots\supset X_m=\varnothing$ by closed complex subvarieties, such that every connected component (stratum) of each difference $X_{j-1}\setminus X_j$, $j=1,\ldots,m$, is an Oka manifold.

(b)  $X$ is \textit{dominable at a point} $x\in X$ (\textit{by} $\C^n$, where $n=\dim X$) if there is a holomorphic map $f:\C^n\to X$ such that $f(0)=x$ and $f$ is a local biholomorphism at $0$.

(c)  $X$ is \textit{dominable} if it is dominable at some point.

(d)  $X$ is \textit{strongly dominable} if it is dominable at every point.

(e)  $X$ is $\C$-\textit{connected} if any two points in $X$ can be joined by a finite chain of entire curves in $X$.  (This definition has several variants: see Remark~\ref{r:C-connectedness}.)
 
(f)  $X$ is \textit{strongly Liouville} if the universal covering space of $X$ carries no nonconstant negative plurisubharmonic functions. 
\end{definition}

Properties (a), (c), (d), and (e) are anti-hyperbolic in the sense that the only Kobayashi hyperbolic manifold satisfying any of them is the point.  (For the weakest property (f), this fails.  A simply connected compact Kobayashi hyperbolic manifold, such as a smooth Kobayashi hyperbolic surface in $\P_3$, satisfies (f).)  We refer to these properties, the Oka property, ellipticity, and other similar properties as \textit{holomorphic flexibility} properties to contrast them with the rigidity that characterises hyperbolicity.  (A more specific definition of flexibility exists in the literature (\cite{Forstneric2011}, Definition 5.5.16), but we shall not use it here.)

An Oka manifold is obviously stratified Oka; the converse is open.  The following implications are easily verified.
\begin{itemize}
\item  If $X$ is Oka, then $X$ is strongly dominable.
\item  If $X$ is strongly dominable, then $X$ is $\C$-connected.
\item  If $X$ is stratified Oka, then $X$ is dominable.
\item  If $X$ is either dominable or $\C$-connected, then $X$ is strongly Liouville.  
\end{itemize}

The last implication depends on the well-known fact that $\C^n$ carries no nonconstant negative plurisubharmonic functions.  It follows in particular that a manifold that is not strongly Liouville is also not Oka, an observation that will be used in the sequel.

The first main result of the paper is the following.

\begin{theorem}  \label{t:stratified-Oka}
A stratified Oka manifold is strongly dominable.
\end{theorem}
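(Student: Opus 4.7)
The plan is to fix $x \in X$ and work with the unique stratum $S$ of the given stratification that contains $x$, setting $k = \dim S$. I would build the desired map $f : \C^n \to X$ by combining the Oka property of $S$ in the $k$ tangential directions with the smooth structure of $X$ in the $n-k$ directions transverse to $S$ at $x$.

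First, I would invoke the already listed implication that Oka implies strongly dominable, applied to the Oka manifold $S$, to obtain a holomorphic map $g : \C^k \to S$ with $g(0) = x$ and $g$ a local biholomorphism at $0$. If $k = n$, the stratum $S$ is open in $X$, so $g$ itself is the desired $f$ and we are done.

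In the main case $k < n$, I would choose flat holomorphic coordinates on a neighbourhood of $x$ in $X$, identifying that neighbourhood with an open subset of $\C^k \times \C^{n-k}$ so that $S$ corresponds locally to $\C^k \times \{0\}$ and $x$ to the origin. Using the Oka property of $S$ with jet interpolation at the origin, I would arrange $g$ to agree with the inclusion $z \mapsto (z, 0)$ up to first order at $0$. This yields a local biholomorphism $h : (\C^n, 0) \to (X, x)$ given in the flat chart by $(z, w) \mapsto (g_{\mathrm{loc}}(z), w)$, where $g_{\mathrm{loc}}$ is $g$ read in the coordinates on $S$; its $1$-jet at $0$ is the sought isomorphism $T_0 \C^n \to T_x X$.

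The main and most delicate step is to extend the germ of $h$ at $0$ to an entire holomorphic map $f : \C^n \to X$ with $f(0) = x$ and $df_0 = dh_0$. Since $X$ itself is not assumed Oka, the basic Oka property with jet interpolation cannot be invoked for $X$ directly; the idea instead is to use Oka approximation within a top-dimensional stratum $U$ of $X$ whose closure contains $x$, combined with $1$-jet interpolation at the origin, to push the germ out into a global entire map with values in $X$. Ensuring that this construction is consistent across the boundaries between strata, while locking in the prescribed $1$-jet at the origin, is the principal technical obstacle; a likely route around it is an induction on the codimension of the stratum containing $x$, or on the length of the stratification, using the Oka properties of successively higher-dimensional strata at each step.
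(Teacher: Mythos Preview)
Your plan identifies the right ingredients but leaves the central step unresolved. You correctly note that the Oka property of a top stratum $U$ cannot be invoked with jet interpolation at the origin, since $x\notin U$; you then propose induction on the length of the stratification, but the entire content of the argument lies in a single inductive step, which your sketch does not carry out. The difficulty with the germ-based approach is structural: if $f$ is a local biholomorphism at $0$ with $f(0)=x\in X_1$ (writing $X_1\subset X$ for the first closed piece of the stratification), then $f^{-1}(X_1)$ near $0$ is a $k$-dimensional submanifold, not a point. The relevant extension result (Theorem~\ref{t:Hayama}) requires as input a continuous map on all of $\C^n$, holomorphic on a neighbourhood of the \emph{closed} subvariety $f^{-1}(X_1)\subset\C^n$, and sending the complement into the Oka manifold $X\setminus X_1$. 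Your germ $h$ controls only a small open piece of this preimage near $0$, and your outline offers no mechanism for producing the required holomorphic data along the rest of it.

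The paper resolves this by arranging that $f^{-1}(X_1)$ is exactly $\C^k\times\{0\}^{n-k}$: one builds a holomorphic map on a full tubular neighbourhood of $\C^k\times\{0\}$ in $\C^n$ that restricts to $g$ on $\C^k\times\{0\}$ and pushes nearby points off $X_1$. This uses three tools absent from your sketch: Grauert's Oka principle to trivialise the pullback $g^*N$ of the normal bundle of $S$ along the \emph{entire} map $g:\C^k\to S$ (not merely at $x$); Siu's Stein neighbourhood theorem applied to the graph of $g$ in $\C^k\times X$, so that the trivialising normal vector fields extend and their flows can be used to thicken $g$ in the normal directions; and finally Theorem~\ref{t:Hayama}, applied with $Y'=X_1$ and $X'=\C^k\times\{0\}$, to globalise to an entire map $\C^n\to X$ agreeing with the thickened $g$ to second order along $\C^k\times\{0\}$. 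The induction you anticipate then proceeds one stratum at a time.
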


A Kummer surface $X$ admits a stratification $X\supset C\supset\varnothing$, where $C$ is the union of 16 mutually disjoint smooth rational curves, and the difference $X\setminus C$ is an Oka manifold (Lemma~\ref{l:Kummer-are-stratified-Oka}).  Thus $X$ is stratified Oka, and we obtain the following corollary.

\begin{corollary}  \label{c:Kummer-strongly-dominable}
Every Kummer surface is strongly dominable.
\end{corollary}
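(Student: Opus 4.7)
The plan is to derive this corollary as a one-line application of Theorem~\ref{t:stratified-Oka}, once Lemma~\ref{l:Kummer-are-stratified-Oka} is in hand: by the lemma $X$ is stratified Oka, and by the theorem $X$ is then strongly dominable. All the substance lies in the lemma, so that is what I will sketch.

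First I would recall the construction. A Kummer surface $X$ arises from a complex 2-torus $T=\C^2/\Lambda$ and the involution $\sigma\colon z\mapsto -z$, whose fixed locus $F\subset T$ consists of the 16 points of order dividing $2$. The singular quotient $T/\sigma$ has 16 ordinary double points, and resolving these yields the smooth Kummer surface $X$ together with an exceptional divisor $C$ consisting of 16 pairwise disjoint smooth rational curves. This gives the stratification $X\supset C\supset\varnothing$, whose strata are the components of $X\setminus C$ and the 16 copies of $\P^1$ comprising $C$. Each $\P^1$ is certainly Oka, so the task reduces to showing that $X\setminus C$ is Oka.

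For this I would identify $X\setminus C\cong (T\setminus F)/\sigma$, a free $\Z/2$-quotient, and then pass to the universal cover $\pi\colon\C^2\to T$ to obtain a holomorphic covering map $\C^2\setminus \pi^{-1}(F)\to X\setminus C$. Since the Oka property passes both up and down holomorphic coverings (as recalled in the introduction), the question reduces to showing that the complement in $\C^2$ of the tame discrete set $\pi^{-1}(F)=F+\Lambda$ is Oka. This is the step I expect to be the main technical obstacle and the real content of Lemma~\ref{l:Kummer-are-stratified-Oka}: one invokes the known fact that complements of closed discrete subsets of $\C^n$ for $n\geq 2$ are Oka, which is usually established by constructing dominating sprays of shear-flow type whose orbits avoid the deleted set.

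Granting this, $X\setminus C$ is Oka, the stratification is Oka, and Theorem~\ref{t:stratified-Oka} immediately delivers the strong dominability of $X$, completing the proof.
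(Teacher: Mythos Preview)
Your overall strategy matches the paper's exactly: deduce the corollary from Theorem~\ref{t:stratified-Oka} and Lemma~\ref{l:Kummer-are-stratified-Oka}, and prove the lemma by passing via covering maps from the Kummer surface minus $C$ to the complement of the discrete set $\pi^{-1}(F)$ in $\C^2$. The gap is in your justification of the last step. The ``known fact that complements of closed discrete subsets of $\C^n$ for $n\geq 2$ are Oka'' is \emph{false}: Rosay and Rudin \cite{Rosay-Rudin} constructed discrete sets $E\subset\C^n$ such that every holomorphic map $\C^n\to\C^n$ whose image avoids $E$ has identically vanishing Jacobian, so $\C^n\setminus E$ is not even dominable, let alone Oka. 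What is true, and what the paper actually uses, is that the complement of a \emph{tame} discrete set is Oka (\cite{Forstneric2011}, Proposition~5.5.14).

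You do label $\pi^{-1}(F)$ as tame, but you offer no argument, and this is not automatic: a half-lattice such as $\pi^{-1}(F)$ is not carried to a subset of a coordinate axis by any affine automorphism, so tameness requires real work. This is precisely the content of Buzzard and Lu \cite{Buzzard-Lu}, Proposition~4.1, which the paper invokes at this point. With that result in hand, $\C^2\setminus\pi^{-1}(F)$ is Oka, and your covering argument (which coincides with the paper's) then shows that the open stratum is Oka, completing the proof of the lemma and hence of the corollary.
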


Kummer surfaces are dense in the moduli space of all K3 surfaces, but we do not know whether it follows that all K3 surfaces are strongly dominable.  In fact, we prove that strong dominability is in general not closed in families of compact complex manifolds (Corollary \ref{c:not-closed}).

Class VII in the Enriques-Kodaira classification comprises the non-algebraic compact complex surfaces of Kodaira dimension $\kappa=-\infty$.  Minimal surfaces of class VII fall into several mutually disjoint classes.  For second Betti number $b_2=0$, we have Hopf surfaces and Inoue surfaces.  For $b_2\geq 1$, there are Enoki surfaces, Inoue-Hirzebruch surfaces, and intermediate surfaces; together they form the class of Kato surfaces.  By the \textit{global spherical shell conjecture}, currently proved only for $b_2=1$ by Teleman \cite{Teleman}, every minimal surface of class VII with $b_2\geq 1$ is a Kato surface.  Assuming that the conjecture holds, we determine which minimal surfaces of class VII are Oka.

\begin{theorem}  \label{t:class-VII}
Minimal Hopf surfaces and Enoki surfaces are Oka.  Inoue surfaces, Inoue-Hirzebruch surfaces, and intermediate surfaces, minimal or blown up, are not strongly Liouville, and hence not Oka. 
\end{theorem}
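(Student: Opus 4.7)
The plan is to treat the two assertions separately. For minimal Hopf surfaces I use that every such surface has universal covering space $\C^2\setminus\{0\}$. The projection $\C^2\setminus\{0\}\to\P_1$ (the complement of the zero section in $\mathcal{O}(-1)$) is a holomorphic $\C^*$-bundle over the Oka base $\P_1$ with Oka fibre $\C^*$, so by the ascent principle for Oka fibre bundles $\C^2\setminus\{0\}$ is Oka; the Oka property then descends along the (discrete) covering map. For an Enoki surface $S$ I invoke the classical structure theorem: $S$ carries a cycle $C$ of rational curves such that $S\setminus C$ is an affine $\C$-bundle over an elliptic curve $E$. Both $E$ and $\C$ are Oka, hence $S\setminus C$ is Oka by the same ascent principle, and Theorem~\ref{t:stratified-Oka} already yields strong dominability of $S$. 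To upgrade to the full Oka property I would attempt to build a stratified dominating spray on $S$ subordinate to the decomposition $S=C\sqcup(S\setminus C)$, using the fibrewise translation action of $\C$ on the affine bundle and a careful analysis of how the corresponding vector field extends across $C$; this is the principal obstacle I foresee.

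For the second assertion I would exhibit non-constant negative plurisubharmonic functions on the relevant universal covers. Every Inoue surface has universal cover $\mathbb{H}\times\C$, where $\mathbb{H}$ is the upper half plane. Fixing a biholomorphism $\phi\colon\mathbb{H}\to\D$ and writing $\pi\colon\mathbb{H}\times\C\to\mathbb{H}$ for the projection, the function $\log|\phi\circ\pi|$ is non-constant, plurisubharmonic, and $\le 0$ on $\mathbb{H}\times\C$, so Inoue surfaces are not strongly Liouville. For Inoue-Hirzebruch and intermediate surfaces, both of which are Kato surfaces, I would invoke the structural description due to Kato, Nakamura and Dloussky-Oeljeklaus-Toma: the universal cover of such a surface admits a holomorphic surjection onto a bounded domain $\Omega\subset\C^2$ arising from the holomorphic contraction used in the Kato construction. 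Pulling back a non-constant bounded plurisubharmonic function on $\Omega$, for instance $\log\|z\|-\log R$ with $R>\sup_\Omega\|z\|$, then supplies the desired non-constant negative plurisubharmonic function on the universal cover.

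Finally, blowing up does not change the fundamental group, so the universal cover of a blowing up $\widehat S$ of $S$ at a point is obtained from the universal cover of $S$ by blowing it up at the (discrete) preimage of the blown-up point. Any non-constant negative plurisubharmonic function on the universal cover of $S$ pulls back under this modification to a plurisubharmonic function that is still negative and, because the modification is biholomorphic on a dense open set, still non-constant on the universal cover of $\widehat S$; iteration handles arbitrary finite iterated blow-ups. Beyond the Oka property of Enoki surfaces noted above, the second substantive obstacle I anticipate is producing a sufficiently explicit bounded-domain model in the universal covers of intermediate surfaces, whose holomorphic structure is more subtle than in the Inoue-Hirzebruch case.
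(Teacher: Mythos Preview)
Your treatment of Hopf surfaces, Inoue surfaces, and the stability of ``not strongly Liouville'' under blow-up is correct and essentially matches the paper.

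The genuine gap is the Enoki case. You correctly observe that $S\setminus C$ is Oka (as an affine $\C$-bundle over an elliptic curve), so $S$ is stratified Oka and hence strongly dominable by Theorem~\ref{t:stratified-Oka}. But that is \emph{all} this gives you: it is an open problem whether stratified Oka implies Oka, and there is no general mechanism by which a ``stratified dominating spray subordinate to $S=C\sqcup(S\setminus C)$'' yields the convex approximation property for $S$. The fibrewise $\C$-action on $S\setminus C$ does not extend to a spray on $S$ dominating across $C$ in any evident way, so the upgrade you sketch is not a known technique and, as you yourself flag, is the crux of the matter. The paper bypasses this entirely: it works on the universal cover $Y$ of $S$, using Enoki's original construction to write $Y$ as an increasing union $Y_0\subset Y_1\subset\cdots$ of open sets, and proves by an explicit induction on iterated blow-ups that each $Y_k$ sits inside a Zariski-locally affine surface, hence is Oka. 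Since the Oka property passes to such colimits and descends along coverings, $S$ is Oka. This is a substantially different argument from yours, and it is where the actual work lies.

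For Inoue-Hirzebruch and intermediate surfaces your plan is vaguer than the paper's and not obviously correct as stated: the universal cover of a Kato surface is an infinite tower of blown-up spherical shells, and it is not clear that it surjects holomorphically onto a bounded domain in $\C^2$. The paper instead cites Dloussky--Oeljeklaus directly for the intermediate case, and for the Inoue-Hirzebruch case uses an explicit model of the complement of the curves in the universal cover as an open subset of $(\C^*)^2$ on which $-d\log|z_1|+c\log|z_2|$ (for suitable constants $d<0<c$) is negative, then extends this psh function across the exceptional curves. You would do well to replace the bounded-domain heuristic with these concrete references and constructions.
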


Enoki surfaces are generic among Kato surfaces.  Inoue-Hirzebruch surfaces and inter\-mediate surfaces can be obtained as degenerations of Enoki surfaces (explicit examples are given in \cite{Dloussky}).  Thus, Theorem \ref{t:class-VII} yields the following corollary.

\begin{corollary}  \label{c:not-closed}
Compact complex surfaces that are Oka can degenerate to a surface that is not strongly Liouville.  Consequently, the following properties are in general not closed in families of compact complex manifolds.
\begin{itemize}
\item  The Oka property.
\item  The stratified Oka property.
\item  Strong dominability.
\item  Dominability.
\item  $\C$-connectedness.
\item  Strong Liouvilleness.
\end{itemize}
\end{corollary}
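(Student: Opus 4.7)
The plan is to reduce the corollary to Theorem \ref{t:class-VII} together with one geometric ingredient about Kato surfaces, namely that Inoue-Hirzebruch and intermediate surfaces arise as degenerations of Enoki surfaces. For that ingredient I would simply cite the explicit holomorphic families constructed in \cite{Dloussky}, which give a one-parameter family of compact complex surfaces whose fibres over a punctured disc are Enoki surfaces and whose central fibre is of Inoue-Hirzebruch or intermediate type.

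The first assertion of the corollary then follows at once: by Theorem \ref{t:class-VII}, every fibre of such a family over the punctured disc is Oka, while the central fibre is not strongly Liouville, so an Oka surface has degenerated to a surface that is not strongly Liouville.

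To deduce the list of non-closure statements, I would invoke the implication diagram recorded just after Definition \ref{d:flexibility-properties}, namely
\[
\text{Oka} \Rightarrow \text{stratified Oka} \Rightarrow \text{dominable} \Rightarrow \text{strongly Liouville}
\]
and
\[
\text{Oka} \Rightarrow \text{strongly dominable} \Rightarrow \C\text{-connectedness} \Rightarrow \text{strongly Liouville}.
\]
Each of the six properties appearing in the corollary lies between the Oka property, which holds on the generic fibre, and strong Liouvilleness, which fails on the central fibre. Consequently each of the six properties holds on the punctured disc but fails at the origin, and hence fails to be closed in this family.

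I do not anticipate any substantial obstacle, since the deep work lies in Theorem \ref{t:class-VII}; the rest is a formal consequence of the implications among the flexibility properties. The only non-trivial external input is the existence of degenerating families of Kato surfaces of the required type, and this is supplied by the explicit examples in \cite{Dloussky}, so no additional deformation-theoretic argument is needed.
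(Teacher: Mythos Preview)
Your proposal is correct and matches the paper's own argument essentially verbatim: the paper deduces the corollary from Theorem~\ref{t:class-VII} together with the explicit families in \cite{Dloussky} whose generic fibres are minimal Enoki surfaces and whose central fibre is Inoue-Hirzebruch or intermediate, and then the chain of implications after Definition~\ref{d:flexibility-properties} handles all six properties at once.
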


The corollary answers a question posed in \cite{Larusson2012}.  There it was shown that the Oka fibres in a family of compact complex manifolds form a $G_\delta$ set.  The corollary says that the set need not be closed.  In fact, the corollary suggests that the only interesting closed anti-hyperbolicity property is the weakest anti-hyperbolicity property, the property of not being Kobayashi hyperbolic.  Now the open question is whether the set of Oka fibres in a family is open, that is, whether the Oka property is stable under small deformations.

Here is what we know about which minimal compact complex surfaces are Oka.
\begin{itemize}
\item  $\kappa=-\infty$:  Rational surfaces are Oka.  A ruled surface is Oka if and only if its base is Oka.  Theorem~\ref{t:class-VII} covers class VII if the global spherical shell conjecture is true.
\item  $\kappa=0$:  Bielliptic surfaces, Kodaira surfaces, and tori are Oka.  It is unknown whether any or all K3 surfaces or Enriques surfaces are Oka.
\item  $\kappa=1$:  Buzzard and Lu determined which properly elliptic surfaces are dominable \cite{Buzzard-Lu}.  Nothing further is known about the Oka property for these surfaces.
\item  $\kappa=2$:  Surfaces of general type are not dominable (this is an easy consequence of \cite{Kobayashi-Ochiai}, Theorem~2), and hence not Oka.
\end{itemize}

In the final section, we show that often, something survives of the Oka property when an Oka manifold is blown down (Theorem~\ref{t:blow-down}).  We also establish a perhaps surprising consequence of the hypothesis that the Oka property is preserved by blow-ups, which may suggest that the hypothesis is false (Proposition~\ref{p:prediction}).

\smallskip\noindent
\textit{Acknowledgement.}  We are grateful to the late Marco Brunella for drawing our attention to the relevance of surfaces of class VII to the question of whether the Oka property is closed in families.  We thank Georges Dloussky for help with the theory of surfaces of class VII.


\section{Stratified Oka manifolds are strongly dominable}  \label{s:stratified-Oka}

In this section we prove Theorem~\ref{t:stratified-Oka}.  The proof is based on the following result, which is a special case of \cite{Forstneric2011}, Theorem 7.6.1.

\begin{theorem}   \label{t:Hayama}
Let $X$ be a Stein manifold, $Y$ be a complex manifold, $Y'$ be a closed complex subvariety of $Y$, and $f: X\to Y$ be a continuous map that is holomorphic on an open neighbourhood of the preimage $X'=f^{-1}(Y')$ in $X$.  If $Y\setminus Y'$ is an Oka manifold, then for each $k\geq 1$, there exists a homotopy of continuous maps $f_t: X\to Y$, $t\in [0,1]$, such that $f=f_0$, $f_1$ is holomorphic on $X$, and for each $t\in [0,1]$, $f_t$ is holomorphic near $X'$, agrees with $f=f_0$ to order $k$ along $X'$, and maps $X\setminus X'$ into $Y\setminus Y'$, that is, $f_t^{-1}(Y')=X'$.
\end{theorem}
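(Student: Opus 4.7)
The plan is to prove this by the standard Oka-principle induction on a normal exhaustion of $X$, using the Oka property of $Y\setminus Y'$ as the engine in the inductive step. First, fix an exhaustion $K_1\Subset K_2\Subset\cdots$ of $X$ by $\mathscr{O}(X)$-convex compact sets, a nested basis $U=V_0\supset V_1\supset V_2\supset\cdots$ of open neighbourhoods of $X'$ with $\overline{V_{j+1}}\subset V_j$ and $\bigcap_j V_j=X'$, and a Hermitian distance on $Y$. I then inductively construct continuous maps $f^{(j)}\colon X\to Y$ and homotopies $f^{(j-1)}\simeq f^{(j)}$ so that at each stage $j$: $f^{(j)}$ is holomorphic on an open set $\Omega_j\supset K_j\cup X'$, agrees with $f$ to order $k$ along $X'$, satisfies $(f^{(j)})^{-1}(Y')=X'$, and is within $\varepsilon_j$ of $f^{(j-1)}$ on $K_{j-1}$ for a rapidly decaying sequence $\varepsilon_j\downarrow 0$, while the joining homotopy enjoys the analogous properties time-dependently.

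The inductive step is the heart of the argument. I treat $(A,B)=(K_{j+1},\overline{V_{j+1}})$ (after small smooth enlargements) as a Cartan pair on which $f^{(j)}$ is already holomorphic near $B$ and near $A\cap\Omega_j$. The new region $A\setminus\Omega_j$ is a compact subset of $X\setminus X'$ whose image under $f^{(j)}$ lies in a compact subset of $Y\setminus Y'$, hence at positive distance from $Y'$. I apply the approximation-with-interpolation form of the Oka principle for maps into $Y\setminus Y'$: because $Y\setminus Y'$ is Oka, one obtains a holomorphic map on a neighbourhood of $A$ which approximates $f^{(j)}$ on $A\cap\Omega_j$ to within any prescribed tolerance and interpolates the $k$-jet of $f$ along $X'\cap A$. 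The old holomorphic piece on $V_{j+1}$ and the new one on a neighbourhood of $K_{j+1}$ are then fused by a Cartan splitting; the resulting map is extended continuously to the rest of $X$ into $Y\setminus Y'$ via an absolute-neighbourhood-retract argument, yielding $f^{(j+1)}$. The joining homotopy $f^{(j)}\simeq f^{(j+1)}$ is produced by the parametric version of the same Oka principle, guaranteeing that every intermediate map also satisfies the three stage-constraints.

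A diagonal, uniform-on-compacta limit of the $f^{(j)}$ delivers the desired $f_1\colon X\to Y$, holomorphic on all of $X$ (including near $X'$, by stability of the $\Omega_j$), and concatenating the stage homotopies with a geometric reparametrisation on $[0,1]$ produces the homotopy $f_t$. The main obstacle I expect is the simultaneous enforcement of the three pointwise constraints throughout the whole homotopy: holomorphy near $X'$, $k$-th order agreement with $f$ along $X'$, and preservation of $f_t^{-1}(Y')=X'$ at every time $t$, not merely at the endpoints. The first two are standard output of the Oka principle with jet interpolation on a closed complex subvariety of a Stein manifold. The third is precisely what forces the Oka hypothesis to be placed on the \emph{complement} $Y\setminus Y'$ rather than on $Y$: the stagewise approximations must be measured with respect to a distance function adapted to $Y'$, so that the image never jumps across $Y'$, and this control is available only because the full Oka principle (with the parametric and interpolation refinements) holds on $Y\setminus Y'$.
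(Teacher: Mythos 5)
The paper does not actually prove this theorem: it quotes it as a special case of Theorem 7.6.1 in \cite{Forstneric2011}, noting that the model case $Y'=\{0\}\subset\C^d$ is the classical complete-intersection problem and that the general case rests on \cite{Forstneric2001} and \cite{Forstneric2010}. Your proposal must therefore stand as a self-contained argument, and as such it has a genuine gap at its central step. You propose to handle the new region of each Cartan pair by applying ``the approximation-with-interpolation form of the Oka principle for maps into $Y\setminus Y'$'' so as to obtain a holomorphic map that ``interpolates the $k$-jet of $f$ along $X'\cap A$''. This is incoherent as stated: $f$ maps $X'$ into $Y'$, so no map with values in $Y\setminus Y'$ can agree with $f$ along $X'$ to any order, and the Oka principle for the complement cannot produce the required interpolation. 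The only viable version of your scheme is to leave $f$ unchanged (to order $k$) near $X'$, where it is already holomorphic, and to modify it only over $X\setminus X'$; but then the whole difficulty is concentrated in the transition region around $X'$, which your outline does not address.

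Concretely, in that transition region the image of the current map comes arbitrarily close to $Y'$, so no uniform smallness of the correction --- in whatever metric ``adapted to $Y'$'' --- can by itself guarantee that the glued map still satisfies $f_t^{-1}(Y')=X'$: a perturbation that is small in sup-norm can create new preimages of $Y'$ arbitrarily close to $X'$. What is actually needed is a correction vanishing to high order along $X'$, together with an argument that such a perturbation produces no new points of $f^{-1}(Y')$ (in the model case $Y'=\{0\}\subset\C^d$ this is the control of the common zero set of a $d$-tuple of holomorphic functions under perturbations divisible by a high power of the ideal of $X'$). In the general case this is achieved by constructing sprays on $Y$ that are dominating over $Y\setminus Y'$ but fix $Y'$ pointwise --- compare Property ELS and the flows $\phi^j_{t_{j,k}g_k(s)}$ in the Appendix of this paper, where the same device appears --- and this is precisely the content of \cite{Forstneric2010} and Section 7.6 of \cite{Forstneric2011} that your outline takes for granted. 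The surrounding scaffolding (exhaustion by holomorphically convex compacta, Cartan pairs, diagonal limit, concatenation and reparametrisation of homotopies) is standard and correctly described.
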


Note that $X'=f^{-1}(Y')$ is a closed complex subvariety of $X$ since $f$ is assumed to be holomorphic on a neighbourhood of $X'$.  

The special case of Theorem \ref{t:Hayama} when $Y'=\{0\}\subset \C^d=Y$ follows from classical Oka-Grauert theory since in this case $Y\setminus Y'= \C^d\setminus \{0\}$ is complex homogeneous; it corresponds to the problem of \textit{complete intersections} (see  \cite{Forstneric2011}, Section 7.5). The general case, similar to the one stated here, was first proved in \cite{Forstneric2001} (Theorem 1.3). The proof of the general case is explained in \cite{Forstneric2011}, Section 7.6, and is based on the paper \cite{Forstneric2010}. 

\begin{proof}[Proof of Theorem~\ref{t:stratified-Oka}]  
Let $Y$ be a stratified Oka manifold of dimension $n$, and let $Y=Y_0\supset Y_1\supset\cdots\supset Y_l=\varnothing$ be a stratification of $Y$ such that $S_j=Y_j\setminus Y_{j+1}$ is Oka for $j=0,\ldots,l-1$.  Given a point $y_0\in Y$, we wish to find a holomorphic map $F:  \C^n\to Y$ such that $F(0)=y_0$ and $F$ is dominating at $0$.

If $y_0\in S_0=Y_0\setminus Y_1$, then the Oka property of $S_0$ implies that there is a holomorphic map $F: \C^n \to S_0$ with $F(0)=y_0$ and rank $n$ at $0$. 

Now suppose $y_0\in S_1=Y_1\setminus Y_2$.  Let $\Sigma$ be the connected component of $S_1$ containing $y_0$.  Set $m=\dim\Sigma$ and $d=n-m$.  Since $\Sigma$ is Oka by assumption, there is a holomorphic map $g:\C^m\to \Sigma$ such that $g(0)=y_0$ and $g$ is a local biholomorphism at $0$.  

We identify $\C^m$ with the subspace $\C^m\times\{0\}^d$ of $\C^n$.  Write the coordinates on $\C^n$ as $z=(z',z'')$ with $z'=(z_1,\ldots,z_m)$ and $z''=(z_{m+1},\ldots,z_n)$.  We shall construct a holomorphic map $F:\C^n \to Y$ such that $F(z',0)=g(z')$ for all $z'\in\C^m$, $F$ maps $\C^n\setminus (\C^m\times\{0\}^d)$ into $S_0$, and $F$ is a local biholomorphism at $0$.

Let $N\to\Sigma$ be the holomorphic normal bundle of $\Sigma$ in $Y$.  By Grauert's Oka principle, the pullback $g^*N\to \C^m$ is a trivial holomorphic vector bundle over $\C^m$  (see \cite{Forstneric2011}, Section 7.2).  A trivialisation of this bundle is given by $d$ linearly independent holomorphic vector fields $V_1,\ldots,V_d$ on $Y$ along $g$ that are normal to $\Sigma$.  More precisely, for every point $z'\in \C^m$ and its image $w'=g(z')\in\Sigma$, the vectors $V_1(z'),\ldots,V_d(z') \in T_{w'} Y$ give a basis of the normal space $N_{w'} = T_{w'} Y / T_{w'} \Sigma$.  

The graph 
\[ G=\{(z',g(z')): z'\in \C^m\}  \subset \C^m\times Y \]
of $g$ is a submanifold of $\C^m\times Y$, biholomorphic to $\C^m$, so it has a Stein open neighbourhood $\Omega$ in $\C^m\times Y$ by Siu's theorem (\cite{Siu1976}; \cite{Forstneric2011}, Theorem 3.1.1).  We identify each $V_j$ with a vector field on $\C^m\times Y$, defined along $G$, that is tangential to the fibres of the projection $\pi_1: \C^m\times Y\to\C^m$. After shrinking $\Omega$ around $G$ if necessary, we can assume that $V_1,\ldots,V_d$ extend to holomorphic vector fields on $\Omega$ that are tangential to the fibres of $\pi_1$. (Since $V_1,\ldots,V_d$ are sections of a holomorphic vector bundle, we can also appeal to Cartan's Theorem B to extend them holomorphically to all of $\Omega$.)  Let $\phi^j_t$ denote the flow of $V_j$ for small complex values of time $t$ (depending on the initial point).  Let $\pi_2 : \C^m\times Y\to Y$ be the projection onto the second factor.  The formula
\[ f(z',z'')= \pi_2 \circ \phi^1_{z_{m+1}} \circ \cdots \circ \phi^d_{z_n} (z',g(z')) \]
defines a holomorphic map $f$ into $Y$ from an open neighbourhood of $\C^m\times\{0\}^d$ in $\C^n$, such that $f(z',0'')=g(z')$ for $z'\in\C^m$ and  
\[ \dfrac{\partial}{\partial z_j}f(z',z'') \bigg\vert_{z''=0}  = V_{j-m}(z') \quad\hbox{for} \ z'\in\C^m \ \hbox{and} \ j=m+1,\ldots,n. \]
Hence the differential $df_{(z',0'')}: T_{(z',0'')}\C^n \to T_{g(z')} Y$ is an isomorphism for every $z'\in \C^m$ near the origin. In particular, $f$ is dominating at $0\in\C^n$ and $f(0)=g(0')=y_0$. Furthermore, as the vector fields $V_1,\ldots,V_d$ trivialise the normal bundle to $\Sigma$ in $Y$, the above implies that there is a neighbourhood $U$ of $\C^m\times\{0\}^d$ in $\C^n$ such that 
\[ f(U\setminus \C^m\times\{0\}^d) \subset S_0=Y_0\setminus Y_1.\] 

We may contract $\C^n$ into $U$ by a smooth contraction that equals the identity on a smaller open neighbourhood $V\subset U$ of $\C^m\times\{0\}^d$.  Precomposing $f$ with this contraction yields a continuous map $\C^n\to Y$ which agrees with $f$ on $V$ and maps $\C^n\setminus (\C^m\times\{0\}^d)$ into $S_0$.  Theorem~\ref{t:Hayama} now provides an entire map $F:\C^n\to Y$ which agrees with $f$ to second order along $\C^m\times\{0\}^d$ and maps $\C^n\setminus (\C^m\times\{0\}^d)$ into $S_0$. In particular, $F$ is dominating at $0$ and $F(0)=y_0$.  This completes the proof when $y_0\in S_1$. 

In general, if $y_0\in S_k=Y_k\setminus Y_{k+1}$ for some $k\in\{1,\ldots,l-1\}$, we choose strata $\Sigma_j\subset S_j$ for $j=0,\ldots,k$ such that $y_0\in \Sigma_k$ and $\Sigma_j\subset \overline{\Sigma}_{j-1}$ for $j=1,\ldots,k$.  Let $m_j=\dim\Sigma_j$, so $m_0=n>m_1>\cdots >m_k$.  Set $d_j=m_j-m_{j+1}$ for $j=0,\ldots,k-1$.  Since $\Sigma_k$ is Oka, there is an entire map $g_k: \C^{m_k}\to\Sigma_k$ which sends $0$ to $y_0$ and is dominating at $0$.  By the above argument and downward induction over $j=0,\ldots,k-1$, there are entire maps $g_j: \C^{m_j} \to\Sigma_j$ such that $g_j=g_{j+1}$ on $\C^{m_{j+1}} \times\{0\}^{d_j}$, and $g_j$ is dominating at $0$ (as a map into $\Sigma_j$).  For $j=0$, we thus get an entire map $F=g_0: \C^n \to Y$ which is dominating at $0$ with $F(0)=y_0$.
\end{proof}


\section{Kummer surfaces are strongly dominable}  \label{s:Kummer-surfaces}

In this section we show that all Kummer surface are stratified Oka, and hence strongly dominable by Theorem \ref{t:stratified-Oka}. We also prove a variant of the Oka property for maps of Stein surfaces to Kummer surfaces.

Let us recall the structure of Kummer surfaces (see \cite{Barth-et-al} for more information).  Let $\T$ be a complex 2-torus, the quotient of $\C^2$ by a lattice $\Z^4\cong\Gamma\subset \C^2$ of rank $4$, acting on $\C^2$ by translations.  Let $\pi: \C^2\to\T=\C^2/\Gamma$ be the quotient map.  The involution $\C^2\to \C^2$, $(z_1,z_2)\mapsto (-z_1,-z_2)$, descends to an involution $\sigma:\T\to\T$ with precisely 16 fixed points $p_1,\ldots,p_{16}$.  In fact, if $\omega_1,\ldots,\omega_4\in\C^2$ are generators for $\Gamma$, then $p_1,\ldots,p_{16}$ are the images under $\pi$ of the 16 points $c_1\omega_1+\cdots+c_4\omega_4$, where $c_1,\ldots,c_4\in\{0,\tfrac 1 2\}$.  The quotient space $\T/\{1,\sigma\}$ is a 2-dimensional complex space with 16 singular points $q_1,\ldots,q_{16}$.  The singularities can be resolved by blowing up $q_1,\ldots,q_{16}$, yielding a smooth compact surface $Y$ containing 16 mutually disjoint smooth rational curves $C_1,\ldots,C_{16}$.  This is the Kummer surface associated to the torus $\T$ or to the lattice $\Gamma$.

Here is an alternative description.  Let $X$ denote the surface obtained by blowing up
the torus $\T$ at each of the 16 points $p_1,\ldots,p_{16}$.  Let $E_j\cong\P_1$ denote the exceptional divisor over $p_j$.  The involution $\sigma$ of $\T$ lifts to an involution $\tau: X \to X$ with the fixed point set $E=E_1\cup\cdots\cup E_{16}$.  The eigenvalues of the differential $d\tau$ at any point of $E$ are $1$ and $-1$.  Hence the quotient $X/\{1,\tau\}$ is smooth and contains 16 rational $(-2)$-curves $C_j\cong\P_1$, the images of the rational $(-1)$-curves $E_j$ in $X$.  The quotient is the Kummer surface $Y$.  Denoting by $\widehat \C^2$ the surface obtained by blowing up $\C^2$ at every point of the discrete set $\widetilde \Gamma= \pi^{-1}(\{p_1,\ldots,p_{16}\})$, we have the following diagram (see \cite{Barth-et-al}, p.\ 224).
\[ \xymatrix{ \widehat\C^2 \ar[r] \ar[d] & X \ar[r] \ar[d]  & Y \ar[d] \\ \C^2 \ar[r]^{\pi} & \T \ar[r] & \T/\{1,\sigma\}} \]

\begin{lemma}  \label{l:Kummer-are-stratified-Oka}
Let $Y$ be a Kummer surface with the exceptional rational curves $C_1,\ldots,C_{16}$.  Then $Y\supset C=C_1\cup\cdots\cup C_{16}\supset \varnothing$ is a stratification whose strata are Oka.
\end{lemma}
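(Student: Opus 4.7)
The lemma asserts the Okaness of two kinds of strata. The lower-dimensional one is $C_1\sqcup\cdots\sqcup C_{16}$: each $C_j\cong\P_1$ is complex homogeneous, hence Oka, so every connected component of the stratum $C\setminus\varnothing$ is Oka.

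For the open stratum $Y\setminus C$, my plan is to reduce to a statement about the universal cover via the covering diagram displayed just before the lemma. Because $\sigma$ acts freely on $\T\setminus\{p_1,\ldots,p_{16}\}$, the quotient map restricts to an \'etale double cover
\[ \T\setminus\{p_1,\ldots,p_{16}\} \,\longrightarrow\, (\T/\{1,\sigma\})\setminus\{q_1,\ldots,q_{16}\} \,\cong\, Y\setminus C, \]
where the last identification is by the contraction $Y\to\T/\{1,\sigma\}$ of the curves $C_j$ to the singular points $q_j$. Likewise, the universal covering $\pi:\C^2\to\T$ restricts to an \'etale $\Gamma$-cover $\C^2\setminus\widetilde\Gamma\to\T\setminus\{p_1,\ldots,p_{16}\}$, where $\widetilde\Gamma=\pi^{-1}(\{p_1,\ldots,p_{16}\})=\tfrac12\Gamma$ is a rank-$4$ lattice in $\C^2$. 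Since the Oka property passes down any holomorphic covering map (noted in the introduction), it suffices to prove that $\C^2\setminus\widetilde\Gamma$ is Oka.

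The heart of the proof, and the main obstacle, is therefore verifying that $\C^2$ with a rank-$4$ lattice removed is Oka. My plan is to construct a dominating (or at worst subelliptic) spray on $\C^2\setminus\widetilde\Gamma$, ideally by producing finitely many complete holomorphic vector fields whose values span the tangent space at every point. The natural candidates are shear-type vector fields $\varphi\cdot V$ with $V$ a constant vector field on $\C^2$ and $\varphi$ an entire function vanishing simply on $\widetilde\Gamma$, since any such vector field preserves $\widetilde\Gamma$ under its $\C^2$-flow and hence restricts to a holomorphic flow on the complement. The difficulty is completeness: the classical single-variable choice $\varphi=\varphi(z_2)$, $V=\partial/\partial z_1$ is automatically complete, but it is unavailable here because the rank-$4$ lattice $\widetilde\Gamma$ projects densely to every coordinate axis, forcing such a $\varphi$ to be identically zero. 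Addressing this requires either a more flexible combination of two-variable shears governed by the density property of $\C^2$ and Andersen--Lempert theory, or an appeal to a general Oka-complement theorem for $\C^n$ minus discrete subvarieties of sufficient structure. This completeness issue for the generating vector fields is where I expect the main technical work to lie.
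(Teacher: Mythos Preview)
Your reduction is exactly the one in the paper: the curves $C_j\cong\P_1$ are Oka, and the open stratum $Y\setminus C$ is Oka if and only if its universal cover $\C^2\setminus\widetilde\Gamma$ is, by passing down the unbranched covers you describe. (Your intermediate space $\T\setminus\{p_1,\ldots,p_{16}\}$ is biholomorphic to the paper's $X\setminus E$, so the two diagrams are the same.)

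The gap is that you do not actually establish that $\C^2\setminus\widetilde\Gamma$ is Oka; you only identify it as the obstacle and sketch a direct attack via complete shear-type vector fields, while correctly noting that the obvious one-variable shears fail because a rank-$4$ lattice projects densely. The paper does not attempt to build such a spray from scratch. Instead it invokes the result of Buzzard and Lu (\cite{Buzzard-Lu}, Proposition~4.1) that the half-lattice $\widetilde\Gamma=\tfrac12\Gamma$ is \emph{tame} in $\C^2$ in the sense of Rosay--Rudin, i.e., there is a holomorphic automorphism of $\C^2$ carrying $\widetilde\Gamma$ to a standard discrete set such as $\mathbb N\times\{0\}$. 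Once tameness is known, the Oka property of the complement follows from a general fact recorded in \cite{Forstneric2011}, Proposition~5.5.14: the complement of any tame discrete set in $\C^n$ is Oka (indeed elliptic). So the missing ingredient in your argument is precisely this tameness theorem; your Anders\'en--Lempert intuition is on target---that is the machinery Buzzard and Lu use to prove tameness---but the work has already been done and should be cited rather than reproduced.
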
     
   
\begin{proof}     
Since each curve $C_j\cong\P_1$ is Oka, we only need to prove that $Y\setminus C$ is Oka.  To see this, note that the involution $\tau: X\to X$ acts without fixed points on $X\setminus E$, so $X\setminus E$ is an unbranched double-sheeted covering space of $Y\setminus C$.  Now $X\setminus E$ is universally covered by $\C^2\setminus \widetilde\Gamma$.  Buzzard and Lu (\cite{Buzzard-Lu}, Proposition 4.1) showed that the discrete set $\widetilde\Gamma$ is tame in $\C^2$ in the sense of Rosay and Rudin \cite{Rosay-Rudin} (see also \cite{Forstneric2011}, Section 4.6).  Hence $\C^2\setminus \widetilde\Gamma$ is Oka (\cite{Forstneric2011}, Proposition 5.5.14).  Since the Oka property passes down along unbranched holomorphic covering maps (\cite{Forstneric2011}, Proposition 5.5.2), $X\setminus E$ and $Y\setminus C$ are also Oka.    
\end{proof}     

The surface $X$ obtained by blowing up a $2$-torus at finitely many points is Oka (\cite{Forstneric2011}, Corollary 6.4.12).  We do not know whether its quotient $Y$ is also Oka; the problem is that the quotient map $X\to Y$ is branched, and it is unknown whether the Oka property passes down along finite branched covering maps.  However, strong dominability of $X$ obviously passes down to give dominability of $Y$ at each point of $Y\setminus C$.  Since the projection $X\to Y$ is branched over $C$, holomorphic maps $\C^2\to Y$ dominating at points of $C$, which exist by Corollary \ref{c:Kummer-strongly-dominable}, do not factor through $X$. 

It would be interesting to know whether every Kummer surface $Y$ satisfies the Oka property restricted to maps from Stein surfaces to $Y$.  By inspecting the proof of Theorem~\ref{t:stratified-Oka}, we see that the problem is essentially topological.  Therefore it is not surprising that we have a positive result for Stein surfaces with sufficiently simple topology, that is, for the \textit{subcritical} Stein surfaces whose CW-decomposition does not contain any cells of index 2.

\begin{theorem}  \label{t:extension}
Let $Y$ be a Kummer surface with the rational curves $C=C_1\cup\cdots\cup C_{16}$.  Let $S$ be a Stein surface and $\Sigma$ be a smooth, possibly disconnected, complex curve in $S$.  If $S$ is obtained from $\Sigma$ by adding only cells of index $0$ or $1$, then every holomorphic map $f: \Sigma \to C$ extends to a holomorphic map $F: S\to Y$ such that $F(S\setminus \Sigma)\subset Y\setminus C$. If $f$ is dominating (of rank one) at some point $x\in \Sigma$, then the extension $F$ can be chosen to be dominating (of rank two) at $x$.
\end{theorem}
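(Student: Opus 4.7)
The plan is to adapt the proof of Theorem~\ref{t:stratified-Oka} to this relative setting, in three steps: (i) extend $f$ to a holomorphic map $f_1:U\to Y$ on a tubular neighbourhood $U$ of $\Sigma$ in $S$ with $f_1|_\Sigma=f$ and $f_1(U\setminus\Sigma)\subset Y\setminus C$; (ii) extend the germ continuously to a map $\tilde f:S\to Y$ with $\tilde f^{-1}(C)=\Sigma$, using the topological hypothesis on $(S,\Sigma)$; (iii) apply Theorem~\ref{t:Hayama} to homotope $\tilde f$ to a holomorphic $F$ with the same first-order jet along $\Sigma$, using that $Y\setminus C$ is Oka by Lemma~\ref{l:Kummer-are-stratified-Oka}.

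For step (i), since $\Sigma$ is a closed complex submanifold of the Stein surface $S$, by Docquier-Grauert it has a holomorphic tubular neighbourhood biholomorphic to a neighbourhood of the zero section of the normal line bundle $N_{\Sigma/S}$. This bundle is trivial: each component of $\Sigma$ is a non-compact Riemann surface (a Stein surface contains no compact complex curves), so $H^2(\Sigma,\Z)=0$, and Grauert's Oka principle yields the triviality. The pullback $f^*N_{C/Y}$ on the Stein curve $\Sigma$ is trivial for the same reason, so a nowhere-vanishing section of it gives a holomorphic vector field $V$ along $f$ in $Y$ that is everywhere transverse to $C$. By Siu's theorem, the graph of $f$ in $\Sigma\times Y$, biholomorphic to the Stein manifold $\Sigma$, has a Stein open neighbourhood $\Omega$, and Cartan's Theorem B extends $V$ holomorphically to $\Omega$ as a vector field tangent to the $Y$-fibres of $\pi_1$. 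The formula
\[ f_1(x,t)=\pi_2\circ\phi^V_t(x,f(x)) \]
then defines a holomorphic extension on a neighbourhood $U$ of $\Sigma$ in $S$, after identifying the tubular neighbourhood with a neighbourhood of $\Sigma\times\{0\}$ in $\Sigma\times\C$. Transversality of $V$ to $C$ gives $f_1(U\setminus\Sigma)\subset Y\setminus C$ after shrinking $U$, and also implies that if $f$ has rank one at $x$, then $f_1$ has rank two at $x$.

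For step (ii), the hypothesis that $S$ is built from $\Sigma$ by attaching only cells of index $0$ and $1$ allows a cell-by-cell continuous extension: 0-cells can be mapped to any point of $Y\setminus C$, and for 1-cells one uses path-connectedness of $Y\setminus C$ after arranging that the endpoints of each 1-cell already lie in $U\setminus\Sigma$. Connectedness of $Y\setminus C$ follows from the fact that the double cover $X\setminus E\to Y\setminus C$ has connected total space, itself covered by the connected space $\C^2\setminus\widetilde\Gamma$. No higher-index cells are present, so no further obstructions arise, and we obtain a continuous $\tilde f:S\to Y$ agreeing with $f_1$ near $\Sigma$ and with $\tilde f^{-1}(C)=\Sigma$. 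Theorem~\ref{t:Hayama} with $X=S$, $Y'=C$, and $k=1$ then produces a holomorphic $F:S\to Y$ with $F|_\Sigma=f$, $F(S\setminus\Sigma)\subset Y\setminus C$, and first-order jet along $\Sigma$ matching that of $\tilde f$, so $F$ is dominating at $x$ whenever $f$ is.

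The main obstacle is step (ii): the topological hypothesis on $(S,\Sigma)$ is precisely what eliminates the obstructions to the continuous relative extension beyond path-connectedness. If a 2-cell were present, the obstruction to filling its interior inside $Y\setminus C$ would lie in $\pi_1(Y\setminus C)$, which is non-trivial (since the universal cover $\C^2\setminus\widetilde\Gamma$ of $Y\setminus C$ has non-trivial deck transformation group), so the subcritical hypothesis is essential for this argument.
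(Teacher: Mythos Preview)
Your proof is correct and follows essentially the same three-step approach as the paper: extend holomorphically to a tubular neighbourhood of $\Sigma$ via the flow construction from the proof of Theorem~\ref{t:stratified-Oka} (using triviality of the relevant line bundles over the open curve $\Sigma$), extend continuously over the added $0$- and $1$-cells using connectedness of $Y\setminus C$, then invoke Theorem~\ref{t:Hayama} with the Oka property of $Y\setminus C$ from Lemma~\ref{l:Kummer-are-stratified-Oka}. One minor correction: in the paper's convention, agreement ``to order $k$'' along $\Sigma$ means matching of the $(k-1)$-jet, so to preserve the differential (and hence the rank) at $x$ you should take $k=2$ rather than $k=1$, as the paper does.
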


\begin{proof} 
By following the proof of Theorem~\ref{t:stratified-Oka}, we can extend $f$ to a holomorphic map $U\to Y$ from an open tubular neighbourhood $U$ of $\Sigma$ in $S$ such that $f(U\setminus \Sigma) \subset Y\setminus C$.  Furthermore, the extension can be chosen such that its rank at any point $x\in \Sigma$ equals the rank of $f\vert\Sigma$ plus $1$.  (It is important to observe that the normal bundle of a noncompact smooth complex curve is trivial. Indeed, any holomorphic vector bundle on an open Riemann surface is trivial by the Oka-Grauert principle; see \cite{Forstneric2011}, Theorem 5.3.1.)  Since $Y\setminus C$ is connected, the topological assumption on the pair $(S,\Sigma)$ implies that $f$ extends to a continuous map $S\to Y$ taking $S\setminus \Sigma$ to $Y\setminus C$.  Since $Y\setminus C$ is Oka by Lemma \ref{l:Kummer-are-stratified-Oka},  Theorem~\ref{t:Hayama} enables us to deform $f$ to a holomorphic map $F: S\to Y$ which agrees with $f$ to second order along $\Sigma$ and maps $S\setminus \Sigma$ to $Y\setminus C$.

However, since $Y\setminus C$ is not simply connected, it may be impossible to extend $f$ across cells of index $2$ in a relative CW-complex representing the pair $(S,\Sigma)$. Indeed, if the restriction of the given map to the boundary of a cell of index $2$ represents a nontrivial loop in $Y\setminus C$, then the map cannot be extended across the cell (as a map into $Y\setminus C$). 
\end{proof}

\begin{example}
The surface $S=\C^2$ with $\Sigma$ equal to the union of finitely many mutually disjoint affine complex lines $\Sigma_1,\ldots,\Sigma_m$ satisfies the hypothesis of Theorem~\ref{t:extension}.  Hence there is a dominating holomorphic map $\C^2\to Y$ whose restriction to each $\Sigma_j$ equals any prescribed biholomorphism onto the complement of a chosen point in any of the rational curves $C_1,\ldots,C_{16}$.
\end{example}


\section{Oka surfaces of class VII}  \label{s:class-VII}

\noindent
In this section we prove Theorem~\ref{t:class-VII} by considering each of the five classes of minimal surfaces $X$ of class VII in turn.

If $X$ is Hopf, then the universal covering space of $X$ is $\C^2\setminus\{0\}$, which is Oka, so $X$ is Oka.  

If $X$ is Inoue, then the universal covering space of $X$ is $\D\times\C$, where $\D$ denotes the open disc, which clearly carries a nonconstant negative plurisubharmonic function. Hence an Inoue surface is not strongly Liouville. 

If $X$ is intermediate, then $X$ is not strongly Liouville by \cite{Dloussky-Oeljeklaus}, Corollary 2.13.

Let $X$ be Inoue-Hirzebruch and $D$ be the union of the finitely many rational curves in $X$.  Let $\tilde D$ be the preimage of $D$ in the universal covering space $\widetilde X$ of $X$.  The complement $\widetilde X\setminus\tilde D$ is described in \cite{Dloussky-Oeljeklaus}, proof of Theorem 2.16, and in \cite{Zaffran}, pp.\ 400--401.  In the notation of \cite{Dloussky-Oeljeklaus}, $\widetilde X\setminus\tilde D$ is isomorphic to the image by the map $\C\times\C\to\C^*\times\C^*$, $(\zeta_1,\zeta_2)\mapsto (e^{\zeta_1},e^{\zeta_2})$, of the half-space in $\C\times\C$ defined by the inequality $-d\Re\zeta_1+c\Re\zeta_2<0$, where $d<0<c$.  Thus, $-d\log\lvert z_1\rvert+c\log\lvert z_2\rvert$ defines a nonconstant negative plurisubharmonic function on $\widetilde X\setminus\tilde D$, which extends across $\tilde D$ to a plurisubharmonic function on $\widetilde X$, so $X$ is not strongly Liouville.

It is easily seen that not being strongly Liouville is preserved by blowing up. It follows that blown-up Inoue, Inoue-Hirzebruch, and intermediate surfaces are not strongly Liouville.

By Enoki's original construction of the surfaces that now bear his name (\cite{Enoki1980}; \cite{Enoki1981}, Section 3), the universal covering space $Y$ of an Enoki surface $X$ is obtained as follows.  Let $W_0=\P_1\times\C$ and $\Gamma=\{\infty\}\times\C\subset W_0$.  For each $k\geq 0$, $W_{k+1}$ is $W_k$ blown up at two distinct points $p_k$ and $p_{-k-1}$, such that, when $k\geq 1$, $p_k$ lies in the total transform of $p_{k-1}$, and $p_{-k-1}$ lies in the total transform of $p_{-k}$.  We take $p_0=(\infty,0)$ and $p_{-1}=(a,0)$ with $a\in \C$.  Also,  $p_k$ lies in the proper transform $\Gamma_k$ of $\Gamma$, but $p_{-k-1}$ lies outside the total transform of $\Gamma$.  (We interpret the proper transform and the total transform of $\Gamma$ in $W_0$ as $\Gamma$ itself.)  Then $p_{-1},p_{-2},\ldots$ lie in the total transforms of the line $\{a\}\times\C$.  Let $Y_k=W_k\setminus(\Gamma_k\cup\{p_{-k-1}\})$.  Then $Y_k$ may be viewed as an open subset of $Y_{k+1}$, and $Y$ is the colimit of the sequence $Y_0\subset Y_1\subset Y_2\subset\cdots$.  (There is a misprint on p.\ 459 of \cite{Enoki1981}: the total transform of $p_{-k-1}$ is $C_{-k-1}$, not $C_{-k-2}$.)

If we formulate the Oka property as the convex approximation property, it is evident that if $Y_k$ is Oka for all $k\geq 0$, then $Y$ is Oka, so $X$ is Oka (\cite{Forstneric2011}, Proposition 5.5.6).  We claim that $W_k\setminus\Gamma_k$ is Zariski-locally affine (affine meaning algebraically isomorphic to $\C^2$); then $Y_k$ is Oka (\cite{Forstneric2011}, Proposition 6.4.6).

Being Zariski-locally affine is preserved by blowing up points (\cite{Gromov}, Section 3.5.D''; \cite{Forstneric2011}, Proposition 6.4.7).  Since $W_0\setminus\Gamma=\C\times\C$ is affine, the complement in $W_k$ of the total transform of $\Gamma$ is Zariski-locally affine.  Thus we need to show that every point in $W_{k+1}\setminus\Gamma_{k+1}$, $k\geq 0$, that lies in the total transform of $\Gamma$ has an affine Zariski-open neighbourhood in $W_{k+1}\setminus\Gamma_{k+1}$.

We claim that every point in $W_k$, $k\geq 0$, that lies in the total transform of $\Gamma$ has an affine Zariski-open neighbourhood $U$ in $W_k$ containing $p_k$ but not $p_{-k-1}$, in which $\Gamma_k$ appears as a straight line.  Namely, for $k=0$, let $U=(\P_1\setminus\{a\})\times\C$.  Suppose the claim is true for $k$ and let $w\in W_{k+1}$ lie in the total transform of $\Gamma$.  Let $V$ be an affine Zariski-open neighbourhood of the image of $w$ in $W_k$ containing $p_k$ but not $p_{-k-1}$, in which $\Gamma_k$ appears as a straight line.  Blowing up $V$ at $p_k$ yields a Zariski-open neighbourhood $V'$ of $w$ in $W_{k+1}$.  Take a line $L$ in $V$ through $p_k$ different from $\Gamma_k$, whose proper transform $L'$ contains neither $w$ nor $p_{k+1}$, and set $U=V'\setminus L'$.  Then $U$ is a Zariski-open neighbourhood of $w$ in $W_{k+1}$ containing $p_{k+1}$ but not $p_{-k-2}$.  Moreover, $U$ is algebraically isomorphic to the total space of an algebraic line bundle over $\C$, so $U$ is affine, and $\Gamma_{k+1}$ appears as a straight line in $U$.

Finally, let $w\in W_{k+1}\setminus\Gamma_{k+1}$, $k\geq 0$, lie in the total transform of $\Gamma$.  Let $U$ be an affine Zariski-open neighbourhood of the image of $w$ in $W_k$ containing $p_k$ but not $p_{-k-1}$, in which $\Gamma_k$ appears as a straight line.  Blowing up $U$ at $p_k$ and removing the proper transform of $\Gamma_k$ yields an affine Zariski-open neighbourhood of $w$ in $W_{k+1}\setminus\Gamma_{k+1}$.

This shows that minimal Enoki surfaces are Oka, so Theorem~\ref{t:class-VII} is proved.


\section{The Oka property is not closed in families}  \label{s:not-closed}

\noindent
Corollary~\ref{c:not-closed} follows from Theorem \ref{t:class-VII} because there is a family $\pi:Y\to\D$ of compact complex manifolds, that is, a proper holomorphic submersion and thus a smooth fibre bundle, such that the central fibre $\pi^{-1}(0)$ is an Inoue-Hirzebruch surface or an inter\-mediate surface, and the other fibres $\pi^{-1}(t)$, $t\in\D\setminus\{0\}$, are minimal Enoki surfaces.  See for instance the explicit examples on pp.\ 34--35 in \cite{Dloussky}.

Corollary~\ref{c:not-closed} implies that the Brody reparametrisation lemma that is used to show that Kobayashi hyperbolicity is open in families of compact complex manifolds \cite{Brody} has no higher-dimensional version that could be used to similarly prove that being the target of a nondegenerate holomorphic map from $\C^2$ is closed in families. (A weaker higher-dimensional version of the reparametrisation lemma was proved in \cite{Nguyen}.)

\begin{remark}  \label{r:C-connectedness}
A remark on the definition of $\C$-connectedness is in order.  It is not a well-known or much-studied property.  Gromov defined a complex manifold $X$ to be $\C$-connected if any two points of $X$ lie in the image of a holomorphic map $\C\to X$, that is, in an entire curve (\cite{Gromov}, Section 3.4).  We chose a weaker property in Definition~\ref{d:flexibility-properties}.  There are obvious alternatives, ranging from requiring every finite subset of $X$ to lie in an entire curve (this holds if $X$ is connected and Oka) to requiring that any two general points can be joined by a chain of entire curves.  We do not know whether these definitions are equivalent, but Corollary~\ref{c:not-closed} clearly holds for all of them.  It is of interest to compare $\C$-connectedness with rational connectedness, a well-understood property introduced in \cite{Campana} and \cite{Kollar-et-al}.  For a smooth proper algebraic variety, the four definitions we have mentioned, with entire curves replaced by rational curves, are equivalent (\cite{Kollar-et-al}, 2.1, 2.2).  Rational connectedness is deformation-invariant (\cite{Kollar-et-al}, 2.4), but by Corollary~\ref{c:not-closed}, $\C$-connectedness of compact complex manifolds is not.

In the context of $\C$-connectedness, we mention that Campana has introduced the notion of a compact K\"ahler manifold $X$ being \textit{special} \cite{Campana2004}.  It is an anti-hyperbolicity or anti-general type notion.  Campana has proved that $X$ is special if $X$ is either dominable, rationally connected, or has Kodaira dimension $0$.  On the other hand, if $X$ is of general type, then $X$ is not special.  Campana has conjectured that $X$ is special if and only if $X$ is $\C$-connected if and only if the Kobayashi pseudo-metric of $X$ vanishes identically.
\end{remark}


\section{Blowing an Oka manifold up or down}  \label{s:blowing-down}

\noindent
It is an open problem whether a blow-down of an Oka manifold is still Oka. In this section we prove the following partial result in this direction. 

\begin{theorem}  \label{t:blow-down}
Let $E$ be a discrete subset of a complex manifold $X$, and let $\widetilde X $ be $X$ blown up at each point of $E$.  Suppose that $\widetilde X$ has one of the following properties.
\begin{itemize}
\item[\rm (a)] $\widetilde X$ is an Oka manifold, and global holomorphic vector fields on $\widetilde X$ span the tangent space of $\widetilde X$ at each point.
\item[\rm (b)] $\widetilde X$ is elliptic in the sense of Gromov (see \cite{Gromov}, Section 0.5).
\end{itemize}
Then $X$ has the basic Oka property with approximation for maps $S\to X$ from Stein manifolds $S$ with $\dim S\leq\dim X$. 
\end{theorem}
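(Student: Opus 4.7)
The plan is to lift the approximation problem from $X$ to $\widetilde X$, apply the basic Oka property with approximation for $\widetilde X$ (which holds in both cases (a) and (b) since $\widetilde X$ is Oka), and project back via $\pi$. Given a continuous $f\colon S\to X$ that is holomorphic on a neighbourhood $U$ of an $\mathcal O(S)$-convex compact $K\subset S$, and $\varepsilon>0$, I would first use the dimension hypothesis $\dim S\leq n=\dim X$ to see that $A=f^{-1}(E)\cap U$, as the preimage of a zero-dimensional subvariety under a holomorphic map, has codimension at least $n$ in $S$ and is therefore discrete; after shrinking $U$ I may assume $A=\{s_1,\dots,s_m\}$ is finite, with $f(s_j)=p_j\in E$.

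Off $A$, the restriction of $\pi^{-1}$ gives a unique holomorphic lift of $f$. The delicate point is constructing a continuous map $\widetilde f\colon S\to\widetilde X$ that is holomorphic on a neighbourhood of $K$ and satisfies $\|\pi\circ\widetilde f-f\|_K<\varepsilon/2$. For $\dim S=1$ the required local lift near $s_j$ is built at once: in local coordinates write $f(z)=z^{m_j}g(z)$ with $g(s_j)\neq 0$ and set $\widetilde f_j(z)=(f(z),[g(z)])$. For $\dim S\geq 2$, however, the universal property of blowing up tells us that a holomorphic lift of $f$ itself at $s_j$ would require $f^*\mathfrak m_{p_j}$ to be locally principal, which typically fails, and $\pi^{-1}\circ f$ need not even extend continuously across $s_j$. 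I would therefore permit a small discrepancy between $\pi\circ\widetilde f$ and $f$ near $s_j$: in local coordinates centred at $s_j$ and $p_j$, using the blow-up chart on $\widetilde X$ in which $\pi$ reads $(\xi_1,\dots,\xi_{n-1},x_n)\mapsto(\xi_1 x_n,\dots,\xi_{n-1}x_n,x_n)$, I take the local model $\widetilde f_j(s)=(0,\dots,0,f_n(s))$. Then $\pi\circ\widetilde f_j$ differs from $f$ by $(f_1(s),\dots,f_{n-1}(s),0)$, a quantity tending to $0$ as $s\to s_j$ and hence smaller than $\varepsilon/2$ on a sufficiently small neighbourhood $W_j$ of $s_j$.

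The local pieces $\widetilde f_j$ on $W_j$ and $\pi^{-1}\circ f$ on $U\setminus\bigcup_j\overline{W_j'}$, with $W_j'\Subset W_j$, must then be assembled into a single holomorphic map on a neighbourhood of $K$. This is where the hypotheses (a) and (b) enter decisively. In case (a), the global tangent-spanning holomorphic vector fields on $\widetilde X$ express the transition between $\widetilde f_j$ and $\pi^{-1}\circ f$ on the annular overlap $W_j\setminus\overline{W_j'}$ as a small holomorphic perturbation generated by their flows; the Cartan-pair splitting construction (\cite{Forstneric2011}, Section 5.8) then glues the pieces into a holomorphic map on a neighbourhood of $K$. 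In case (b), the dominating spray on $\widetilde X$ plays the same role via the classical spray-based gluing used in Oka theory. The continuous extension of $\widetilde f$ to all of $S$ is purely topological and uses the path-connectedness of the fibres $\pi^{-1}(p_j)\cong\P_{n-1}$. Applying the basic Oka property with approximation for $\widetilde X$ to $\widetilde f$ yields a holomorphic $\widetilde F\colon S\to\widetilde X$ close to $\widetilde f$ on $K$, and $F=\pi\circ\widetilde F\colon S\to X$ is the required holomorphic approximant.

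The main obstacle is the holomorphic gluing step in dimension at least two. Ensuring that the ad hoc local liftings $\widetilde f_j$ can be joined to the canonical lift $\pi^{-1}\circ f$ off $A$ into a holomorphic map on a neighbourhood of $K$, while retaining the approximation of $f$ on $K$, is where the flexibility of $\widetilde X$ is substantively used; the mere Oka property of $\widetilde X$ would not suffice, since what is needed is active deformation of sections over a fixed domain rather than merely the existence of global holomorphic maps.
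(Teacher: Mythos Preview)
Your proposal has two genuine gaps, and the paper's proof proceeds along a different route.

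First, the claim that $A=f^{-1}(E)\cap U$ is automatically discrete is false. The preimage of a point under a holomorphic map can have arbitrary positive dimension (think of a constant map, or a map that collapses a curve). The dimension hypothesis $\dim S\le n$ does not by itself bound the fibre dimension from below. The paper fixes this by first perturbing $f|U$, using the holomorphic transversality theorem of Kaliman--Zaidenberg, so that $f|U$ becomes transverse to $E$; when $\dim S<n$ this forces $f(U)\cap E=\varnothing$, and when $\dim S=n$ it forces $f^{-1}(E)\cap U$ to be finite and $f$ to be a local biholomorphism at each of its points.

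Second, your gluing step does not work as stated. On the annular overlap $W_j\setminus\overline{W_j'}$ your model lift $\widetilde f_j(s)=(0,\dots,0,f_n(s))$ and the canonical lift $\pi^{-1}\circ f(s)=(f_1(s)/f_n(s),\dots,f_{n-1}(s)/f_n(s),f_n(s))$ are in general \emph{not} $C^0$-close in $\widetilde X$: the ratios $f_i/f_n$ need not be small there, and the two maps may land near different points of the exceptional $\P_{n-1}$. Cartan-pair splitting requires the transition to be a small perturbation, so the spanning vector fields or the dominating spray cannot be invoked in the way you describe. You correctly identify this as ``the main obstacle'', but the proposal does not resolve it.

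The paper avoids this obstacle altogether by changing the source rather than patching lifts. After the transversality perturbation in the equidimensional case, $f$ is a local biholomorphism near each $p_j\in f^{-1}(E)$, so blowing up $S$ at $p_1,\dots,p_m$ produces a $1$-convex manifold $\widetilde S$ on which $f$ lifts \emph{canonically and holomorphically} to $g:\widetilde S\to\widetilde X$ near the exceptional set $\Lambda$; there is no gluing to do. A topological deformation of $f$ off $L$ ensures $f^{-1}(E)\subset L$, so $g$ is globally defined and continuous. The hypotheses (a) and (b) then enter through a relative Oka principle for maps from $1$-convex manifolds (proved in the paper's Appendix via an ``existence of local sprays'' property that (a) and (b) each guarantee), which deforms $g$ to a global holomorphic map $g_1:\widetilde S\to\widetilde X$ agreeing with $g$ to second order along $\Lambda$. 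That second-order agreement is exactly what allows $g_1$ to descend to a holomorphic $f_1:S\to X$.
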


The conclusion of the theorem means the following.  Given a Stein manifold $S$ of dimension at most $n=\dim X$, a holomorphically convex compact set $K$ in $S$, and a continuous map $f: S\to X$ that is holomorphic on an open neighbourhood $U$ of $K$ in $S$, it is possible to deform $f$ to a holomorphic map $f_1: S\to X$ through continuous maps $f_t: S\to X$, $t\in [0,1]$, that are holomorphic on an open set containing $K$ and are uniformly as close as desired to the initial map $f=f_0$ on $K$.

We recall that every elliptic manifold is Oka (\cite{Forstneric2011}, Corollary 5.5.12).  Note that we take a discrete subset to be closed by definition.

\begin{proof}
Assume first that $\dim S<n$. Pick a compact holomorphically convex set $K$ in $S$ and a continuous map $f: S\to X$ that is holomorphic on an open neighbourhood $U$ of $K$ in $S$. Choose a smaller open neighbourhood $U_1\Subset U$ of $K$. By the transversality theorem of Kaliman and Zaidenberg \cite{Kaliman-Zaidenberg} (see also \cite{Forstneric2011}, Section 7.8) there exists a holomorphic map $\tilde f\colon U_1\to X$, close to $f|U_1$, that is transverse to the discrete set $E \subset X$, and hence by dimension reasons it avoids $E$. Assuming as we may that $\tilde f$ is close enough to $f$ on $U_1$, and shrinking $U_1$ around $K$ if necessary, we conclude that $\tilde f|U_1$ is also homotopic to $f|U_1$ through a homotopy of holomorphic maps. By using a cut-off function in the parameter of the homotopy and the usual transversality theorem for smooth maps, we can extend $\tilde f$, without changing its values on a smaller neighbourhood $U_2\Subset U_1$ of $K$, to a smooth map $\tilde f\colon S\to X\setminus E$ that is holomorphic in a neighbourhood $U_2$ of $K$. Hence $\tilde f$ lifts to a continuous map $g: S\to \widetilde X$ that is holomorphic near $K$.  Since $\widetilde X$ is Oka, we can deform $g$ to a holomorphic map $S\to \widetilde X$ which approximates $g$ uniformly on $K$.  By pushing the homotopy down to $X$ we get a deformation of $\tilde f$, and hence of $f$, to a holomorphic map $f_1\colon S\to X$.

Suppose now that $\dim S=n$.  By the same perturbation argument as above we may assume that the restriction $f\vert U : U\to  X$ to an open neighbourhood $U$ of $K$ in $S$ is transverse to the discrete set $E\subset X$. Equivalently, every point of $E$ is a regular value of $f\vert U$.  This implies that the set $U\cap f^{-1}(E)$ is discrete, and after shrinking $U$ slightly around $K$ we may assume that it is finite.  

Choose a strongly plurisubharmonic Morse exhaustion function $\rho: S\to\mathbb R$, such that $K\subset \{\rho<0\} \Subset U$ and $0$ is a regular value of $\rho$.  Then $S$ is a cellular extension of the compact set $L=\{\rho\leq 0\}$, obtained by attaching to $L$ cells of index at most $n$.  

Write $f^{-1}(E)\cap L = \{p_1,p_2,\ldots, p_m\}$. By standard topological arguments we can deform $f$, keeping it fixed on $L$, to a new map $\tilde f: S\to X$ such that $\tilde f^{-1}(E)=\{p_1,p_2,\ldots, p_m\} \subset L$.  In fact, suppose that such a deformation has already been found over the sublevel set $\{\rho\le t\}$ for some $t\ge 0$, and we wish to extend it to $\{\rho\le t'\}$ for some $t'>t$.  We may assume that $t$ and $t'$ are regular values of $\rho$.  If $\rho$ has no critical values in $[t,t']$, there is no change of topology, so the extension obviously exists. At a critical point of $\rho$, we need to choose the extension of the deformation across a totally real disc of dimension equal to the Morse index of $\rho$ such that the range of the new map avoids $E$.  Since the real dimension of any such disc is at most $n$, the desired property can be achieved by a general position argument.  Thus, replacing $f$ by $\tilde f$, we may assume that $f^{-1}(E)=\{p_1,p_2,\ldots, p_m\} \subset L$.

Let $\widetilde S$ be obtained by blowing up $S$ at each of the points $p_1,\ldots, p_m$, and let $\tau : \widetilde S\to S$ be the blow-down map.  Then $\widetilde S$ is a $1$-convex manifold with the exceptional divisors $\Lambda_j\cong \P_{n-1}$ over the points $p_j$, and the map $\tau$ is the Remmert reduction of $\widetilde S$.  Similarly we denote by $\pi: \widetilde X\to X$ the blow-down map which collapses each of the exceptional divisors over the points of $E$.  Note that the restrictions $\pi: \widetilde X\setminus \pi^{-1}(E) \to X\setminus E$ and $\tau : \widetilde S \setminus \Lambda \to S\setminus \{p_1,\ldots,p_m\}$, where $\Lambda = \Lambda_1\cup\cdots\cup\Lambda_m$, are biholomorphic.

Since $f$ is locally biholomorphic near each of the points $p_1,\ldots,p_m$, it induces a holomorphic map $g: \widetilde S\to \widetilde X$ such that the following diagram commutes.
\[ \xymatrix{ 
	  \widetilde S  \ar[d]_{\tau}  \ar[r]^{g}  & \widetilde X \ar[d]^{\pi} \\ S \ar[r]^{f} & X}
\]
The map $g$ is holomorphic on a neighbourhood of $\widetilde L=\tau^{-1}(L)$ in $\widetilde S$ and maps each divisor $\Lambda_j$ biholomorphically onto the exceptional divisor $E_j=\pi^{-1}(f(p_j)) \subset \widetilde X$ over the image point $f(p_j)\in E$.  Furthermore, since $f$ maps $S\setminus \{p_1,\ldots,p_m\}$ to $X\setminus E$, its lifting $g$ maps $\widetilde S\setminus \Lambda$ to $\widetilde X\setminus \pi^{-1}(E)$. 

Since the manifold $\widetilde S$ is 1-convex, Theorem \ref{th:appendix} in the Appendix shows that the Oka principle applies to maps $\widetilde S\to \widetilde X$ that are holomorphic near the exceptional subvariety $\Lambda$ of $\widetilde S$, with interpolation to any given finite order along $\Lambda$, and with approximation on the holomorphically convex compact set $\widetilde L=\tau^{-1}(L) \subset \widetilde S$.  We thus obtain a holomorphic map $g_1 :  \widetilde S\to \widetilde X$, homotopic to $g$,  that agrees with $g$ to second order along $\Lambda$.  Hence $g_1$ descends to a holomorphic map $f_1: S\to X$: this is nontrivial only over a neighbourhood of the finite set $\{p_1,\ldots,p_m\}$ in $S$ that was blown up; elsewhere we get $f_1$ simply by noting that the restriction $\tau : \widetilde S \setminus \Lambda \to S\setminus \{p_1,\ldots,p_m\}$ is biholomorphic.
\end{proof}

Our next result presents a consequence of the hypothesis that the Oka property is preserved by blow-ups.  It may point to the hypothesis being false.  The first question to consider would be whether every embedded disc in $\C^3$ is a holomorphic retract of an embedded surface.

\begin{proposition}  \label{p:prediction}
Let $A$ be a submanifold of $\C^n$, contractible and not Oka.  If the blow-up of $\C^n$ along $A$ is Oka, then $A$ is a holomorphic retract of a hypersurface in $\C^n$.
\end{proposition}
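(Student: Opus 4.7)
The plan is to use the Oka property of the blow-up $\widetilde X$ of $\C^n$ along $A$ to construct a holomorphic map $F:\C^n\to\widetilde X$ whose preimage $H:=F^{-1}(E)$ of the exceptional divisor $E$ is a hypersurface containing $A$; the retraction of $H$ onto $A$ will then come from the natural projection $E\to A$.

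First I would set up the normal geometry. Let $k$ be the codimension of $A$ in $\C^n$ and $\pi:\widetilde X\to\C^n$ the blow-down, so that $E=\pi^{-1}(A)$. Since $A$ is a contractible Stein manifold, its normal bundle in $\C^n$ is holomorphically trivial (topologically trivial by contractibility, then holomorphically trivial by Oka-Grauert), so $E\cong A\times\P_{k-1}$ with first-factor projection $p:E\to A$. Fix $p_0\in\P_{k-1}$ and let $\sigma:A\to E$, $a\mapsto(a,p_0)$, so that $p\circ\sigma=\mathrm{id}_A$; set $\widetilde A:=\sigma(A)$. In the affine chart of $\widetilde X$ around $\widetilde A$ with coordinates $(a,w)\in A\times\C^k$ for which $E=\{w_1=0\}$ and $\widetilde A=\{w=0\}$, define the local model $f_0(a,z)=(a,z)$ on a tubular neighbourhood of $A$ in $\C^n$; it is holomorphic, restricts to $\sigma$ on $A$, and is transverse to $E$ along $A$. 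Choose a holomorphically convex compact set $K\subset\C^n\setminus A$ and extend $f_0$ continuously to all of $\C^n$ so that on a neighbourhood of $K$ it is a constant map with value in $\widetilde X\setminus E$; such an extension exists because any tubular neighbourhood of $A$ is contractible, which makes the local model null-homotopic in $\widetilde X$.

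Next I would invoke the Oka property of $\widetilde X$ in the form with approximation on $K$ and first-order jet interpolation along $A$---one of the standard equivalent formulations of the Oka property (\cite{Forstneric2011}, Chapter 5)---to obtain a holomorphic $F:\C^n\to\widetilde X$ with $F|_A=\sigma$, $dF|_A=df_0|_A$, and $F$ uniformly close to $f_0$ on $K$. Thus $F$ is transverse to $E$ along $A$ and $F(K)\cap E=\varnothing$, so $F(\C^n)\not\subset E$. Set $H:=F^{-1}(E)$. Because $\C^n$ is Stein and contractible, $F^*\mathcal O(E)$ is a trivial line bundle and a defining section of $E$ pulls back to an entire function on $\C^n$ that is not identically zero (it does not vanish on $K$); by the identity principle $H$ has pure codimension one, hence is a hypersurface. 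Transversality of $F$ to $E$ along $A$ forces $H$ to be smooth near $A$, and $A\subset H$ since $F(A)=\widetilde A\subset E$. The holomorphic map $r:=p\circ F|_H:H\to A$ satisfies $r|_A=p\circ\sigma=\mathrm{id}_A$, exhibiting $A$ as a holomorphic retract of $H$.

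The principal step to verify is the availability of the approximation-plus-first-order-jet-interpolation form of the Oka property, but this is a standard consequence of the equivalence theorems for Oka manifolds; the rest of the argument is a routine synthesis of the triviality of $N_{A/\C^n}$ with the Oka principle on $\widetilde X$. Notably, the hypothesis that $A$ is not Oka plays no direct role in this construction, which is consistent with the authors' suggestion that the real difficulty lies not in proving the implication but in the conclusion itself holding for genuine examples such as a disc in $\C^3$.
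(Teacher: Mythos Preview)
Your argument is correct and follows the same architecture as the paper's proof: trivialise the normal bundle of $A$ to get a holomorphic section $\sigma:A\to E$, extend $\sigma$ continuously to a map $\C^n\to\widetilde X$, upgrade to a holomorphic map via the Oka property of $\widetilde X$, and read off the retraction $p\circ F$ on $H=F^{-1}(E)$.

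The one genuine difference is in how you rule out $H=\C^n$.  The paper uses only zeroth-order interpolation along $A$ and then invokes the hypothesis that $A$ is not Oka: if $H$ were all of $\C^n$, then $A$ would be a holomorphic retract of $\C^n$, hence Oka.  You instead insert a holomorphically convex compact $K\subset\C^n\setminus A$ and use approximation there to force $F(K)\cap E=\varnothing$, so the ``not Oka'' hypothesis is never used.  This is a legitimate strengthening, and indeed the paper notes in the remark following its proof that one can drop the ``not Oka'' hypothesis (there by a transversality construction that also makes $H$ globally smooth).  Your first-order jet interpolation along $A$ is likewise extra---it buys smoothness of $H$ near $A$, which is pleasant but not required by the statement, since the proposition allows $H$ to be singular.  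If you strip out the jet interpolation and the auxiliary compact $K$, you recover exactly the paper's argument.
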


\begin{proof}
Let $B$ be the blow-up of $\C^n$ along $A$ with the projection $\pi:B\to\C^n$.  The restriction $\pi: L =\pi^{-1}(A)\to A$ is the projectivised normal bundle of $A$, and $L$ is a smooth hypersurface in $B$.  

Since $A$ is contractible and Stein, its normal bundle is holomorphically trivial by the Oka-Grauert principle (\cite{Forstneric2011}, Section 5.3), so the restriction $\pi:L\to A$ admits a holomorphic section $\sigma: A\to L$.  Since $A$ and $\C^n$ are both contractible, the inclusion $A\hookrightarrow\C^n$ is an acyclic cofibration, so $A \stackrel{\sigma}\to L\hookrightarrow B$ extends to a continuous map $\tau: \C^n\to B$.  (Readers unfamiliar with the basic homotopy theory result invoked here may consult \cite{May}: it follows from the proposition on p.\ 44 that $A$ is a continuous retract of $\C^n$.)  Furthermore, as $B$ is Oka by assumption, we can choose  the extension $\tau$ to be holomorphic.  (Note that in the case of interest, when $\dim A\leq n-2$, $\tau$ is not a section of $\pi:B\to\C^n$, as $\pi$ has no continuous sections at all.) 

The preimage $H=\tau^{-1}(L)\subset\C^n$ is either all of $\C^n$ or a (possibly singular) hypersurface in $\C^n$.  Let $\rho=\pi\circ\tau:H\to A$.  Then $A\subset H$ and $\rho\vert A=\pi\circ\sigma=\textrm{id}_A$, so $\rho$ is a holomorphic retraction of $H$ onto $A$.  Now a holomorphic retract of an Oka manifold is Oka, so since $A$ is not Oka, $H$ is not $\C^n$.  
\end{proof}

\begin{remark}
By a more involved construction it is possible to insure that the hypersurface in Proposition \ref{p:prediction} is smooth (and drop the hypothesis that $A$ is not Oka).  Here is an outline; we leave the details to the reader.  

Using the notation in the proof of the proposition, let $N$ be the normal (line) bundle of the smooth hypersurface $L=\pi^{-1}(A)$ in $B$.  Choose a holomorphic section $\sigma : A\to L$ as in the proof.  Since $A$ is contractible, the restriction of the line bundle $N$ to the submanifold $\sigma(A) \subset L$ is holomorphically trivial, and thus admits a nowhere-vanishing holomorphic section $W$.  The normal bundle of $A$ in $\C^n$ is also trivial, so it admits a nowhere-vanishing holomorphic section $V$.  We can view $V$ as a holomorphic vector field on $\C^n$ along $A$.  Using the techniques in \cite{Forstneric2011}, Section 3.4, we can extend $\sigma: A\to L$ to a holomorphic map $\tilde \sigma : U\to B$ on an open neighbourhood $U$ of $A$ in $\C^n$, such that its differential $d\tilde \sigma$ at any point of $A$ maps $V$ to $W$.  This implies that $\tilde \sigma$ is transverse to $L$ in some open neighbourhood $U'\subset U$ of $A$.  By the same argument as above, using the Oka principle for maps $\C^n\to B$, we can interpolate $\tilde \sigma$ to second order along $A$ by a holomorphic map $\tau: \C^n\to B$ that is everywhere transverse to the hypersurface $L$ (see \cite{Forstneric2011}, Section 7.8).  It follows that the preimage $H=\tau^{-1}(L)$ is a smooth hypersurface in $\C^n$, and $A$ is a holomorphic retract of $H$.
\end{remark}

Finally, in case the reader is wondering what happens to Proposition \ref{p:prediction} when $A$ is Oka, we offer the following result.

\begin{proposition}
A contractible complex submanifold $A$ of $\C^n$ is Oka if and only if $A$ is a holomorphic deformation retract of $\C^n$ itself.
\end{proposition}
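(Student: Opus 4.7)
The reverse direction is immediate: a holomorphic deformation retract is in particular a holomorphic retract, and since $\C^n$ is elliptic (hence Oka) and the Oka property passes to holomorphic retracts, $A$ must be Oka.

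For the forward direction I plan to proceed in three steps: construct a continuous retraction $\C^n\to A$ by elementary homotopy theory, straighten it to a holomorphic retraction via the Oka property with interpolation, and then produce the deformation by a linear convex combination. For the first step, since $A$ is a closed complex submanifold of $\C^n$ it admits a tubular neighbourhood, so the inclusion $i:A\hookrightarrow\C^n$ is a cofibration. Contractibility of $A$ makes $\mathrm{id}_A$ null-homotopic, and any constant map $\C^n\to A$ extends it up to homotopy; the homotopy extension property then yields a genuine continuous retraction $r_0:\C^n\to A$ with $r_0\circ i=\mathrm{id}_A$.

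For the second step, I would invoke that Oka manifolds satisfy the basic Oka property with interpolation (one of the many equivalent definitions collected in \cite{Forstneric2009}). Applied to $r_0:\C^n\to A$---continuous on the Stein manifold $\C^n$ and holomorphic (equal to $\mathrm{id}_A$) on the closed complex submanifold $A$---this furnishes a homotopy $r_t:\C^n\to A$ fixing $A$ pointwise, with $r:=r_1$ holomorphic; thus $r$ is a holomorphic retraction. Finally, the linear homotopy $H_t(x)=(1-t)x+t\,i(r(x))$, $t\in[0,1]$, is a family of holomorphic self-maps of $\C^n$ joining $\mathrm{id}_{\C^n}$ to $i\circ r$ and fixing $A$ pointwise (since $r|_A=\mathrm{id}_A$), realising $A$ as a strong holomorphic deformation retract of $\C^n$.

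I anticipate no real obstacle. The principal conceptual input is the equivalence of the Oka property with its interpolation version; the continuous retraction step is purely topological, and the final deformation step is automatic from the convexity of $\C^n$.
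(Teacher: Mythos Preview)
Your argument is correct, and the first two steps coincide with the paper's: both obtain a holomorphic retraction $\rho:\C^n\to A$ from contractibility plus the Oka property with interpolation. The divergence is in the final step. The paper builds the deformation by defining $f$ on $W=(\C^n\times\{0,1\})\cup(A\times[0,1])$ by $f(x,0)=x$, $f(x,1)=\rho(x)$, $f(a,t)=a$, extending $f$ continuously to $\C^n\times[0,1]$ (using that $W\hookrightarrow\C^n\times[0,1]$ is a cofibration and $\C^n$ is contractible), and then invoking the \emph{parametric} Oka principle for maps into $\C^n$ to make each slice holomorphic while keeping the boundary data fixed. Your linear homotopy $H_t(x)=(1-t)x+t\,\rho(x)$ bypasses this entirely: convexity of $\C^n$ already gives a family of holomorphic self-maps joining $\mathrm{id}_{\C^n}$ to $i\circ\rho$ and fixing $A$ pointwise. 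This is genuinely more elementary---you use the Oka property once rather than twice, and never need the parametric version---at the cost of being specific to the ambient space being convex (the paper's route would adapt to a contractible Oka ambient manifold that is not convex). One small technical point: the standard formulation of the basic Oka property with interpolation asks for the initial map to be holomorphic on a \emph{neighbourhood} of the subvariety, not merely on it; you should note that $r_0$ can first be adjusted near $A$ using a holomorphic tubular neighbourhood retraction (the normal bundle of $A$ is trivial by Oka--Grauert since $A$ is contractible and Stein), after which your invocation of BOPI is clean.
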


It is immaterial here whether or not the blow-up of $\C^n$ along $A$ is Oka.

It follows that every holomorphically embedded complex line $L$ in $\C^n$, $n\geq 2$, is a holomorphic deformation retract of $\C^n$, although $L$ need not be straightenable.  It is even possible that $\C^n\setminus L$ is Kobayashi hyperbolic (see \cite{Forstneric2011}, Section 4.18).

\begin{proof}
First, if $A$ is a holomorphic retract of $\C^n$, then $A$ is Oka.  Now suppose $A$ is Oka.  Since $A$ is also contractible, the identity map of $A$ extends by the inclusion $A\hookrightarrow \C^n$ to a holomorphic retraction $\rho:\C^n\to A$. 

Let $W=(\C^n\times\{0,1\})\cup(A\times[0,1])$ and define a continuous map $f:W\to\C^n$ by $f(x,0)=x$ and $f(x,1)=\rho(x)$ if $x\in\C^n$, and $f(a,t)=a$ if $a \in A$ and $t\in[0,1]$.  Since $W\hookrightarrow\C^n\times[0,1]$ is a cofibration and $\C^n$ is contractible, $f$ extends to a continuous map $g:\C^n\times[0,1]\to\C^n$.  Since $\C^n$ is Oka, we can use the parametric Oka principle (\cite{Forstneric2011}, Theorem 5.4.4) to deform $g$, keeping it fixed on $W$, to a continuous map $h$ such that $h(\cdot,t):\C^n\to\C^n$ is holomorphic for all $t\in[0,1]$.  This shows that $A$ is a holomorphic deformation retract of $\C^n$.
\end{proof}


\section{Appendix: The Oka principle for maps from 1-convex manifolds}  \label{appendix}

In this appendix we sketch the proof of the following result which was used in the proof of Theorem \ref{t:blow-down}. 

Recall that a complex manifold $S$ is said to be 1-{\em convex} if $S$ admits an exhaustion function that is strongly plurisubharmonic outside a compact subset of $S$. Such a manifold $S$ contains a maximal compact complex subvariety $\Lambda$ of positive dimension (possibly empty). The Remmert reduction of $S$, which is obtained by blowing down each connected component of $\Lambda$ to a point, is a (possibly singular) Stein space.

\begin{theorem}
\label{th:appendix}
Let $X$ be a complex manifold with one of the following properties.
\begin{itemize}
\item[\rm (a)] $X$ is an Oka manifold, and holomorphic vector fields on $X$ span the tangent space of $X$ at each point.
\item[\rm (b)] $X$ is elliptic in the sense of Gromov (see \cite{Gromov}, Section 0.5).
\end{itemize}
Given a 1-convex manifold $S$ with the exceptional variety $\Lambda$, a compact  holomorphically convex subset $L$ of $S$, and a continuous map $f:S\to X$ which is holomorphic in a neighbourhood of $\Lambda\cup L$, there is a holomorphic map $\tilde f:S\to X$ that agrees with $f$ to a given finite order along $\Lambda$ and uniformly approximates $f$ as closely as desired on $L$. The map $\tilde f$ can be chosen to be homotopic to $f$ through a homotopy of maps that are holomorphic near $\Lambda\cup L$, agree with $f$ to a given finite order along $\Lambda$, and uniformly approximate $f$ on $L$.
\end{theorem}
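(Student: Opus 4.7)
The plan is to run the standard proof of the Oka principle for 1-convex sources, as in \cite{Forstneric2011}, Sections 6.11--6.14, while carrying the jet interpolation condition along $\Lambda$ throughout. Fix a strongly plurisubharmonic Morse exhaustion $\rho:S\to\mathbb R$ with $\Lambda\cup L\Subset\{\rho<0\}$ and $0$ a regular value, and assume (after shrinking) that $f$ is holomorphic on an open neighborhood of $\{\rho\le 0\}$. I would then inductively deform $f$ across the sublevel sets $\{\rho\le c\}$ as $c$ increases, producing at each stage a homotopy through continuous maps $S\to X$ that are holomorphic on the current sublevel set, agree with $f$ to order $k$ along $\Lambda$, and approximate $f$ uniformly on $L$.

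Between critical values of $\rho$, the inductive step uses the standard Cartan splitting on a pseudoconvex bump: one realises the new holomorphic extension as the image of the current map under a composition with a local spray $s:E\to X$, and constructs the required holomorphic section of $s^*TX$ by splitting a $\bar\partial$-cocycle over a Cartan pair. In case (b), $s$ is the global dominating spray on $X$. In case (a), one uses the local spray $s(x,t_1,\ldots,t_N) = \phi^1_{t_1}\circ\cdots\circ\phi^N_{t_N}(x)$ built from the time-$t_j$ flows of the spanning vector fields $V_1,\ldots,V_N$, defined for small complex $t_j$; its differential at $t=0$ is surjective onto $T_xX$ at every $x\in X$, and that surjectivity on the zero section is all that the splitting argument actually requires. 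At a critical value of $\rho$, I would first extend the current map continuously across a totally real handle (whose real dimension is at most $n=\dim S$) and then approximate by a holomorphic map using the convex approximation property of $X$, as in the handle-attachment lemmas of \cite{Forstneric2011}, Chapter 3.

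The jet interpolation along $\Lambda$ is preserved essentially for free: since $\Lambda$ lies in the interior of $\{\rho<0\}$, the current map is already holomorphic on a fixed neighborhood of $\Lambda$ at every stage of the induction, and to ensure that the splitting does not disturb the $k$-jet along $\Lambda$ one constrains the holomorphic section of $s^*TX$ to vanish to order $k$ along $\Lambda$. This constraint defines a closed subsheaf cut out by $\mathcal I_\Lambda^{k+1}$, and Cartan's Theorem~B for coherent sheaves on $S$ (which applies to the noncompact part of $S$ together with a neighborhood of $\Lambda$, since $S$ is 1-convex) provides the required vanishing and bounds to solve the restricted $\bar\partial$-equation. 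The main technical obstacle is thus the quantitative form of this constrained splitting, namely showing that the corrections can still be made of arbitrarily small norm on the bump while vanishing to the prescribed order on $\Lambda$; once this is in hand, the convergence of the inductive construction and the production of the global homotopy proceed exactly as in the Stein case.
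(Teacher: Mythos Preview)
Your outline is broadly sound and would lead to a proof, but it is organised differently from the paper's argument and handles the interpolation along $\Lambda$ by a different mechanism.

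The paper does not rerun the whole bump-by-bump induction. Instead it isolates a single missing ingredient, which it calls Property ELS: the existence, for any holomorphic $f:S\to X$ and any relatively compact $U\supset\Lambda$, of a dominating holomorphic spray $F:U\times V\to X$ with core $f$ that is \emph{identically fixed} over $\Lambda$ (i.e., $F(s,t)=f(s)$ for $s\in\Lambda$) and dominating at every $s\in U\setminus\Lambda$. Once ELS is available, the paper simply invokes Prezelj's proof (or the Stein proof in \cite{Forstneric2011}, Chapter~5) as a black box; the jet interpolation along $\Lambda$ is then automatic, because the spray never moves points over $\Lambda$. The substance of the paper's proof is therefore the lemma that (a) and (b) each imply ELS. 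For (a), the trick is to replace each flow term $\phi^j_{t_j}$ in your spray by $\phi^j_{t_{j,k}g_k(s)}$, where $g_1,\ldots,g_l$ are holomorphic functions on $U$ cutting out $\Lambda$; this forces the spray to be the identity over $\Lambda$ while remaining dominating off $\Lambda$. For (b), one restricts the spray bundle $E$ to the graph of $f$, which is a $1$-convex manifold, and uses the standard fact that a holomorphic vector bundle on a $1$-convex space admits finitely many holomorphic sections vanishing on the exceptional set and spanning every fibre off it.

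Your route is instead to use sprays on $X$ that are \emph{not} fixed on $\Lambda$ and to enforce interpolation by constraining the holomorphic section in the splitting lemma to lie in the subsheaf cut out by $\mathcal I_\Lambda^{k+1}$. This is workable, but it shifts the burden to the quantitative constrained $\bar\partial$-estimates you flag at the end, and your appeal to ``Cartan's Theorem~B on $S$'' needs to be stated as the $1$-convex version (finite-dimensional obstructions, vanishing after twisting). The paper's device of building the vanishing directly into the spray sidesteps all of this: once the spray is fixed on $\Lambda$, the induction is literally the Stein induction with no extra sheaf-theoretic bookkeeping.
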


This \textit{relative Oka principle} is due to Henkin and Leiterer \cite{Henkin-Leiterer} in the classical case when $X$ is complex homogeneous. 

In \cite{Prezelj}, J.\ Prezelj stated a much more general Oka principle for sections of holomorphic submersions $Z \to S$ onto 1-convex manifolds $S$.  A map $S\to X$ can be identified with a section of the trivial submersion $Z=S\times X  \to S$, and in this case the only hypothesis in her result (Theorem 1.1 in \cite{Prezelj}) is that the fibre $X$ of the submersion is an Oka manifold. However, there is a gap in the proof given in \cite{Prezelj}, and it is not clear at this time whether the proof can be repaired. The (only) problem lies in the construction of a local dominating holomorphic spray around a given holomorphic section $f:S\to Z$ (see Section 4.2 in \cite{Prezelj}).  We lack a proof that there exist vertical holomorphic vector fields in certain conical Stein neighbourhoods of the graph $f(S\setminus \Lambda)\subset Z$ that are bounded near $f(\Lambda)$ (here, $\Lambda \subset S$ is the exceptional variety) and that generate the vertical tangent bundle of $Z$. In the special case of interest to us, the proof in \cite{Prezelj} is correct and complete provided that $X$ is an Oka manifold satisfying the following additional property.

\smallskip
\noindent {\bf Property ELS (existence of local sprays).}  A complex manifold $X$ satisfies ELS if for every holomorphic map $f: S\to X$ from a 1-convex manifold $S$ and every relatively compact open set $U\Subset S$ containing the exceptional variety $\Lambda$ of $S$, there exist an integer $N\in \mathbb N$, an open set $V\subset \C^N$ with $0\in V$, and a holomorphic map $F:U \times V \to X$ satisfying the following properties:
\begin{itemize}
\item[\rm (a)] $F(s,0)=f(s)$ for all $s\in U$, 
\item[\rm (b)] $F(s,t)=f(s)$ for all $s\in \Lambda$ and $t\in V$, and
\item[\rm (c)] the partial differential
\[ \frac{\partial}{\partial t}\bigg|_{t=0} F(s,t) : \C^N \longrightarrow T_{f(s)} X \]
is surjective for every $s \in U \setminus \Lambda$.  
\end{itemize}
\smallskip

Under the additional hypotheses that an Oka manifold $X$ satisfies ELS, the proof of the Oka principle for maps $S\to X$ from $1$-convex manifolds requires only minor modifications of the proof in the standard case when $S$ is a Stein manifold; see \cite{Prezelj} and Chapter 5 in \cite{Forstneric2011}. In fact, it suffices to postulate the existence of a holomorphic spray as in the definition of ELS merely in a small open neighbourhood of the exceptional variety $\Lambda$; using standard tools (see for example \cite{Forstneric2007}) we can then construct a spray with the stated properties over any relatively compact open subset of $S$. 

Therefore, to complete the proof of Theorem \ref{th:appendix}, we need the following lemma.

\begin{lemma}
{\rm (i)}  If global holomorphic vector fields on a complex manifold $X$ span the tangent space of $X$ at each point, then 
$X$ satisfies ELS.

{\rm (ii)}  Every elliptic manifold satisfies ELS.
\end{lemma}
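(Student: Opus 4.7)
My plan is to handle (i) and (ii) in parallel, using a common device of holomorphic functions on $S$ that vanish precisely on $\Lambda$ to enforce the degeneracy required by condition (b) of ELS. First I would produce such functions via the Remmert reduction $\tau\colon S\to S_0$ of the $1$-convex manifold $S$: the image $\tau(\Lambda)=\{p_1,\ldots,p_\ell\}$ is a finite subset of the Stein space $S_0$, and Cartan's Theorem~A on $S_0$ yields holomorphic functions $g_1,\ldots,g_K\in\mathcal{O}(S_0)$ generating the ideal sheaf of $\{p_1,\ldots,p_\ell\}$ at every point. Their pullbacks $h_k=g_k\circ\tau\in\mathcal{O}(S)$ then vanish on $\Lambda$ and have no common zero on $S\setminus\Lambda$.

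For (i) I would take global holomorphic vector fields $V_1,\ldots,V_N$ on $X$ spanning $T_xX$ pointwise, with flows $\phi^j_\tau$, and define
\[
F(s,t)=\bigl(\phi^1_{h_1(s)t_{1,1}}\circ\phi^1_{h_2(s)t_{1,2}}\circ\cdots\circ\phi^N_{h_K(s)t_{N,K}}\bigr)(f(s))
\]
for $t=(t_{j,k})\in\C^{NK}$, where all $NK$ flow maps are composed in any fixed order. Compactness of $\overline U$ and continuity of the $h_k$ provide an open neighborhood $V$ of $0\in\C^{NK}$ on which the composition is defined for all $s\in U$. Property~(a) is immediate; property~(b) holds because every time parameter carries a factor $h_k(s)$ that vanishes on $\Lambda$; and property~(c) follows from $\partial_{t_{j,k}}F(s,0)=h_k(s)V_j(f(s))$: at any $s\in U\setminus\Lambda$ some $h_k(s)\ne 0$, so these partial derivatives contain nonzero scalar multiples of $V_1(f(s)),\ldots,V_N(f(s))$ and hence span $T_{f(s)}X$.

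For (ii) I would use a dominating holomorphic spray $\mu\colon E\to X$ and Cartan--Grauert theory on the $1$-convex manifold $S$ to obtain finitely many global holomorphic sections $\sigma_1,\ldots,\sigma_M$ of the pullback vector bundle $f^*E$ that generate $(f^*E)_s$ at every $s$ in a neighborhood of $\overline U$. Setting
\[
F(s,t)=\mu\Bigl(\sum_{i,k}h_k(s)\,t_{i,k}\,\sigma_i(s)\Bigr),\qquad t=(t_{i,k})\in\C^{MK},
\]
gives a holomorphic map $U\times\C^{MK}\to X$; here we may simply take $V=\C^{MK}$ since $\mu$ is defined on all of $E$. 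Conditions~(a) and~(b) are again immediate, and $\partial_{t_{i,k}}F(s,0)=d\mu_{0_{f(s)}}\bigl(h_k(s)\sigma_i(s)\bigr)$; at $s\in U\setminus\Lambda$ the scaled sections still span $E_{f(s)}$, and surjectivity of $d\mu_{0_{f(s)}}$ then yields $T_{f(s)}X$, verifying~(c).

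The only substantive ingredients are the existence of the ideal generators $h_k$ and, for (ii), of the spanning sections $\sigma_i$; both are standard applications of Cartan's Theorem~A (for Stein, respectively for $1$-convex, complex spaces). I therefore do not anticipate a real obstacle beyond the production of these global objects, after which the verification of the ELS conditions is a direct computation.
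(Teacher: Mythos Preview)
Your proof is correct and follows essentially the same route as the paper: compose flows with time variables scaled by functions cutting out $\Lambda$ in part~(i), and evaluate the dominating spray on linear combinations of global sections of $f^*E$ in part~(ii). The only imprecision is your choice of finitely many global fields $V_1,\ldots,V_N$ spanning $T_xX$ at \emph{every} point of $X$---the hypothesis does not guarantee a finite spanning set globally, but compactness of $f(\overline U)$ yields one over the relevant image, which is all that is needed (the paper makes this reduction explicit); for~(ii) the paper cites a result giving sections of $f^*E$ that already vanish on $\Lambda$ and generate off it, whereas you achieve the same effect by multiplying generating sections by your $h_k$, an equivalent and equally valid maneuver.
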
 

\begin{proof}[Proof of {\rm (i)}] Fix a 1-convex manifold $S$, a holomorphic map $f:S\to X$, and an open relatively compact subset $U\Subset S$ containing the exceptional subvariety $\Lambda$ of $S$. Since the set $f(\overline U) \subset X$ is compact, the condition on $X$ gives finitely many holomorphic vector fields $V_1,\ldots,V_N$ whose values at any point $x\in f(\overline U)$ span the tangent space $T_x X$. Let $\phi^j_t$ denote the flow of $V_j$. The formula
\begin{equation}
\label{eq:spray}
	F(s,t_1,\ldots,t_N) = \phi^1_{t_1}\circ \cdots \circ \phi^N_{t_N} (f(s)) \in X,\quad s\in S,
\end{equation}
defines a holomorphic map in an open neighbourhood of $S\times \{0\}$ in $S\times \C^N$. Clearly we have $F(s,0)=f(s)$ for every $s\in S$, and  
\[ \frac{\partial}{\partial t_j}\bigg|_{t=0} F(s,t) = V_j(f(s))\in  T_{f(s)} X \]
for every $s\in S$ and $j=1,\ldots, N$. Our choice of the vector fields $V_j$ implies that $F$ is a dominating holomorphic spray over $U$. 

To get sprays that are independent of the parameter $t$ over the points $s\in \Lambda$ (see property (b) in the definition of ELS), we choose finitely many holomorphic functions $g_1,\ldots, g_l$ on $U$ with $\{g_1,\ldots,g_l=0\} =\Lambda$, and replace each term $\phi^j_{t_j}$ in \eqref{eq:spray} by the composition of $l$ terms $\phi^j_{t_{j,k}g_k(s)}$, $k=1,\ldots,l$, where $t_j=(t_{j,1},\ldots,t_{j,l}) \in\C^l$ is near the origin.  (There is no misprint here: the function $g_k(s)$ appears as a multiplicative factor in the time variable of the flow.)  Since $g_{k}(s)=0$ for $s\in \Lambda$, each of the maps $\phi^j_{t_{j,k}g_k(s)}$ agrees with the identity when $s\in \Lambda$.  If on the other hand $s\in U\setminus \Lambda$, then $g_k(s)\ne 0$ for some $k$, and the composition of flows $\phi^j_{t_{j,k}g_k(s)}$ for $j=1,\ldots,N$ (and with fixed $k$) is a spray which is dominating at $s$.

\smallskip\noindent {\em Proof of {\rm (ii)}.} 
Let $(E,\pi,\sigma)$ be a dominating spray on $X$; that is, $\pi: E\to X$ is a holomorphic vector bundle and $\sigma:E\to X$  is a holomorphic map such that $\sigma(0_x)=x$ and the differential $d\sigma_{0_x} : T_{0_x}E \to T_x X$ maps the subspace $E_x$ of $T_{0_x}E$ surjectively onto $T_x X$ for every $x\in X$. (Here, $0_x$ is the zero element of the fibre $E_x$.) We may consider $(E,\pi,\sigma)$ as a fibre-dominating spray on the product submersion $S\times X\to S$ which is independent of the base variable $s\in S$. Let $f:S\to X$ be a holomorphic map. We associate to $f$ the section $\tilde f(s)=(s,f(s))$  of $S\times X \to S$. The restriction of $E$ to the graph $\widetilde S = \tilde f(S)\subset S\times X$ of $f$ is a holomorphic vector bundle over the 1-convex manifold $\widetilde S$. By a standard result (see e.g.\ \cite{Prezelj}, Theorem 2.4), such a bundle admits finitely many holomorphic sections $V_1,\ldots, V_N$ that vanish on the exceptional variety $\widetilde \Lambda =\tilde f(\Lambda)$ and generate the fibre $E_z$ over each point $z=\tilde f(s) \in \widetilde S\setminus \widetilde \Lambda$. The holomorphic map $F:S\times\C^N\to X$, defined by
\[
	F(s,t_1,\ldots, t_N)= \sigma\bigl( t_1V_1(\tilde f(s)) + \cdots + t_N V_N(\tilde f(s)) \bigr),
\]
is then a globally defined dominating holomorphic spray of maps $S\to X$ with the core map $f=F(\cdotp,0)$. As in the proof of (i) above, we can also construct sprays that are fixed over the exceptional variety.  This shows that $X$ satisfies ELS.   
\end{proof}


\begin{thebibliography}{99}

\bibitem{Barth-et-al}
Barth, W.\ P., K.\ Hulek, C.\ A.\ M.\ Peters, and A.\ Van de Ven.  \textit{Compact complex surfaces.}  Second edition.  Ergebnisse der Mathematik und ihrer Grenzgebiete. 3.\ Folge, 4. Springer-Verlag, 2004.

\bibitem{Brody}
Brody, R.  \textit{Compact manifolds and hyperbolicity.}  Trans.\ Amer.\ Math.\ Soc.\ \textbf{235} (1978) 213--219. 

\bibitem{Buzzard-Lu}
Buzzard, G.\ T.\ and S.\ S.\ Y.\ Lu.  \textit{Algebraic surfaces holomorphically dominable by $\C^2$.}  Invent.\ Math.\ \textbf{139} (2000) 617--659.

\bibitem{Campana}
Campana, F.  \textit{Connexit\'e rationnelle des vari\'et\'es de Fano.}  Ann.\ Sci.\ \'Ecole Norm.\ Sup.\ (4) \textbf{25} (1992) 539--545. 

\bibitem{Campana2004}
Campana, F.  \textit{Orbifolds, special varieties and classification theory.}  Ann.\ Inst.\ Fourier (Grenoble) \textbf{54} (2004) 499--630.

\bibitem{Dloussky}
Dloussky, G.  \textit{From non-K\"ahlerian surfaces to Cremona group of $\P^2(\C)$.}  Preprint, June 2012, {\tt arXiv:1206.2518}.

\bibitem{Dloussky-Oeljeklaus}
Dloussky, G.\ and K.\ Oeljeklaus.  \textit{Vector fields and foliations on compact surfaces of class $\rm VII_0$.}  Ann.\ Inst.\ Fourier (Grenoble) \textbf{49} (1999) 1503--1545.

\bibitem{Enoki1980}
Enoki, I.  \textit{On surfaces of class ${\rm VII}_0$ with curves.}  Proc.\ Japan Acad.\ Ser.\ A Math.\ Sci.\ \textbf{56} (1980) 275--279.

\bibitem{Enoki1981}
Enoki, I.  \textit{Surfaces of class ${\rm VII}_0$ with curves.}  T\^ohoku Math.\ J.\ (2) \textbf{33} (1981) 453--492.

\bibitem{Forstneric2001}
Forstneri\v c, F.  \textit{On complete intersections.}  Ann.\ Inst.\ Fourier (Grenoble) \textbf{51} (2001) 497--512.

\bibitem{Forstneric2007}
Forstneri\v c, F.
\textit{Manifolds of holomorphic mappings from strongly pseudoconvex domains.}
Asian J.\ Math.\ \textbf{11} (2007) 113--126.

\bibitem{Forstneric2009}
Forstneri\v c, F.  \textit{Oka manifolds.}  C.\ R.\ Acad.\ Sci.\ Paris, Ser.\ I \textbf{347} (2009) 1017--1020. 

\bibitem{Forstneric2010}
Forstneri\v c, F.  \textit{The Oka principle for sections of stratified fiber bundles.}  Pure Appl.\ Math.\ Q.\ \textbf{6} (2010) 843--874. 

\bibitem{Forstneric2011}
Forstneri\v c, F.  \textit{Stein manifolds and holomorphic mappings.}  Ergebnisse der Mathematik und ihrer Grenzgebiete.  3.\ Folge, 56.  Springer-Verlag, 2011.

\bibitem{Forstneric-Larusson}
Forstneri\v c, F.\ and F.\ L\'arusson.  \textit{Survey of Oka theory.}  New York J.\ Math.\ \textbf{17a} (2011) 11--38.

\bibitem{Gromov}
Gromov, M.  \textit{Oka's principle for holomorphic sections of elliptic bundles.}  J.\ Amer.\ Math.\ Soc.\ \textbf{2} (1989) 851--897.

\bibitem{Henkin-Leiterer}
Henkin, G.\ M.\ and J.\ Leiterer.
\textit{The Oka-Grauert principle without induction over the base dimension.}
Math.\ Ann.\ \textbf{311} (1998) 71--93.

\bibitem{Kaliman-Zaidenberg}
Kaliman, S.\ and M.\ Zaidenberg. 
\textit{A transversality theorem for holomorphic mappings and stability of Eisenman-Kobayashi measures.}
Trans.\ Amer.\ Math.\ Soc.\ \textbf{348} (1996) 661--672. 

\bibitem{Kobayashi-Ochiai}
Kobayashi, S.\ and T.\ Ochiai.  \textit{Meromorphic mappings onto compact complex spaces of general type.}  Invent.\ Math.\ \textbf{31} (1975) 7--16. 

\bibitem{Kollar-et-al}
Koll\'ar, J., Y.\ Miyaoka, and S.\ Mori.  \textit{Rationally connected varieties.}  J.\ Algebraic Geom.\ \textbf{1} (1992) 429--448.

\bibitem{Larusson2012}
L\'arusson, F.  \textit{Deformations of Oka manifolds.}  Math.\ Zeit.\ \textbf{272} (2012) 1051--1058.

\bibitem{May}
May, J.\ P.  \textit{A concise course in algebraic topology.}  Chicago Lectures in Mathematics.  University of Chicago Press, 1999.

\bibitem{Nguyen}
Nguyen Doan Tuan.  \textit{A higher-dimensional version of the Brody reparametrization lemma.}  Ukra\"in.\ Mat.\ Zh.\ \textbf{56} (2004) 1369--1377; translation in Ukrainian Math.\ J.\ \textbf{56} (2004) 1633--1645.

\bibitem{Prezelj}
Prezelj, J.  \textit{A relative Oka-Grauert principle for holomorphic submersions over $1$-convex spaces.}  Trans.\ Amer.\ Math.\ Soc. \textbf{362} (2010) 4213--4228. 

\bibitem{Rosay-Rudin}
Rosay, J.-P.\ and W.\ Rudin.  \textit{Holomorphic maps from ${\bf C}^n$ to ${\bf C}^n$.}  Trans.\ Amer.\ Math.\ Soc.\ \textbf{310} (1988) 47--86.

\bibitem{Siu1976}
Siu, Y.\ T.  \textit{Every Stein subvariety admits a Stein neighborhood.}  Invent.\ Math.\ \textbf{38} (1976/77) 89--100.

\bibitem{Teleman}
Teleman, A.  \textit{Donaldson theory on non-K\"ahlerian surfaces and class VII surfaces with $b_2=1$.}  Invent.\ Math.\ \textbf{162} (2005) 493--521.

\bibitem{Zaffran}
Zaffran, D.  \textit{Serre problem and Inoue-Hirzebruch surfaces.}  Math.\ Ann.\ \textbf{319} (2001) 395--420.

\end{thebibliography}
\end{document}